\def\h{ {\cal H} }
\def\a{ {\cal A} }
\def\l{ {\cal L} }
\def\q{ {\cal Q} }
\def\b{ {\cal B} }
\def\t{ {\cal T} }
\def\s{ {\cal S} }
\def\e{ {\cal E} }
\def\o{ {\cal O} }
\def\p{ {\cal P} }
\def\k{ {\cal K} }
\newtheorem{teo}{Theorem}[section]
\newtheorem{prop}[teo]{Proposition}
\newtheorem{lem}[teo]{Lemma}
\newtheorem{coro}[teo]{Corollary}
\theoremstyle{definition}
\newtheorem{rem}[teo]{Remark}
\newtheorem{ejem}[teo]{Example}
\title{The matched projection and geodesics of the Grassmann manifold}
\author{Esteban Andruchow}
\begin{document}

\maketitle 

\begin{abstract}Given an idempotent operator $E$  in a complex Hilbert space $\h$, one can associate to it two orthogonal projections:
\begin{itemize}
\item The polar decomposition $2E-1=(2P-1)|2E-1|$ provides an orthogonal projection $P$. That the unitary part in the decomposition of $2E-1$ is of this form, i.e., a selfadjoint unitary operator,  is a remarkable observation done by G. Corach, H. Porta and L. Recht (see references below).
\item The question of which, among all orthogonal projections, is the one closest in norm to $E$, provides another projection, the so called {\it matched projection} $m(E)$, which answers this question. It was  found by X.  Tian, Q. Xu and  C. Fu (see references below).
\end{itemize}
In this paper we show that these projections coincide. Moreover, we show that there exists a unique minimal geodesic of the Grassmann manifold of $\h$ (the manifold of closed subspaces of $\h$) that joins $R(E)$ and $R(E^*)$. The orthogonal projection onto the midpoint of this geodesic, also coincides with $m(E)$.
 \end{abstract}

\bigskip

{\bf 2020 MSC: 47B02, 58BXX, 58B20 }  .

{\bf Keywords: Idempotents, Projections, Grassmann manifold }  .

\section{Introduction}

Let $E$ be an idempotent in $\b(\h)$, which written as a $2\times 2$ matrix in terms of the decomposition $\h=R(E)\oplus R(E)^\perp$ is of the form
\begin{equation}\label{matriz E}
E=\left(\begin{array}{cc} 1 & B \\ 0 & 0\end{array} \right),
\end{equation}
where $B:R(E)^\perp\to R(E)$. If $\s\subset \h$ is a closed subspace, denote by $P_\s$ the orthogonal projection onto $\s$. In this note we study the position of the projections $P_{R(E)}$ and $P_{R(E^*)}=P_{N(E)}^\perp$ as points in the Grassmann manifold $\p(\h)$, of all orthogonal projections in $\h$.  The points $P_{R(E)}$ and $P_{R(E^*)}$   lie at distance strictly less than $1$, namely (Theorem \ref{teo 24})
$$
\|P_{R(E)}-P_{R(E^*)}\|= \frac{\|B\|}{(1+\|B\|^2)^{1/2}}.
$$
In \cite{pr} H. Porta and L. Recht, introduced a natural linear connection in $\p(\h)$, and proved that two projections $P,Q\in\p(\h)$ such that  $\|P-Q \|<1$  are joined by a unique geodesic of $\p(\h)$  (up to reparametrization), which is minimal for the Finsler metric given by the usual norm of $\b(\h)$ at very tangent space.  It follows that $P_{R(E)}$ and $P_{R(E^*)}$ are joined at times $t=0$ and $t=1$ by a (unique, minimal) geodesic $\delta_E$, which is of the form $\delta_E(t)=e^{itX_E}P_{R(E)}e^{-itX_E}$, for $X_E^*=X_E$, $\|X_E\|<\pi/2$, co-diagonal in the above $2\times 2$ matrix representation.

In \cite{TianXuFu} X. Tian, Q. Xu and  C.  Fu introduced the {\it matched projection} $m(E)$ of an idempotent operator $E$. Among  remarkable properties, they prove that $m(E)$ is the orthogonal projection which is closest in norm to $E$:
$$
\|m(E)-E\|\le \|P-E\|, \ \hbox{ for all } P\in\p(\h).
$$
In \cite{cpr}, G. Corach, H. Porta and L. Recht studied the inmersion $\p(\h)\hookrightarrow \q(\h)$,  of $\p(\h)$ into the larger space $\q(\h)=\{E\in\b(\h): E^2=E\}$ of idempotent operators, or oblique projections. For instance, they prove that the $C^\infty$-map
$$
\pi:\q(\h)\to\p(\h), 
$$
is a retraction, where $2E-1=(2\pi(E)-1)|2E-1|$ is the polar decomposition of $2E-1$.

In this paper we show that
$$
m(E)=\delta_E(\frac12)=\pi(E).
$$
The contents of the paper are the following. In Section 2 we give basic properties of $R(E)$ and $R(E^*)$. In Section 3 we compute the spectrum (and norm) of $P_{R(E)}-P_{R(E^*)}$. In Section 4 we characterize the geodesic $\delta_E$, by giving an expression of the exponent $X_E$ in terms of the operator $B$ of (\ref{matriz E}). In Section 5 we consider the matched projection $m(E)$ of $E$ \cite{TianXuFu}, and show that $m(E)=\delta_E(\frac12)$.  In Section 6 we use Halmos' theory for two subspaces \cite{halmos} and Davis' results on the difference of orthogonal projections \cite{davis}, to give further descriptions of $m(E)$. In Section 7 we study an example: $E_a=\frac12(\Gamma_a-1)$, where $\Gamma_a$ is the composition operator $\Gamma_af=f\circ\varphi_a$ acting in $\h=L^2(\mathbb{T}, dm)$ ($\mathbb{T}$ the unit circle, $dm$ the normalized Lebesgue measure), and $\varphi_a(z)=\frac{a-z}{1-\bar{a}z}$, for $|a|<1$. In Section 8 we consider the retraction $\pi:\q(\h)\to\p(\h)$ and prove that $\pi(E)=\delta_E(\frac12)$.  We also examine certain metric properties of $m(E)$.

\section{Preliminaries}

We say that an operator $C$ is a {\it reflection} if $C^2=1$; a {\it symmetry} $S$ is a selfadjoint reflection.
We identify a closed subspace $\s\subset\h$ with the orthogonal projection $P_\s$ onto $\s$, and with the symmetry $2P_\s-1$ which equals the identity on $\s$ and minus the identity on $\s^\perp$. The set of subspaces, projections or symmetries will be denoted $\p(\h)$. Similarly, we denote by $\q(\h)$ the space of idempotents $Q$ or  reflections $2Q-1$.

When considering a pair of subspaces $\s,\t\subset\h$ (or projections $P_\s.P_\t$) as points in $\p(\h)$, a  key geometric fact  are the intersections $\s\cap\t^\perp$ and $\s^\perp\cap\t$: the necessary and sufficient condition for the existence geodesics of $\p(\h)$ joining $P_\s$ and $P_\t$ is that these  intersections have the same dimension; the geodesic is unique (up to reparametrization) iff these intersections are trivial. The geodesic $\delta$ which joins $\delta(0)=\s$ and $\delta(1)=\t$ ($\s$ and $\t$ satisfying the above condiion) takes the form of a one-parameter group of unitaries acting on $\s$. The infinitesimal generator of this group is a bounded selfadjoint operator $X$, which can be chosen with norm $\|X\|\le\pi/2$, and its $2\times 2$ matrix with respect to $\s\oplus\s^\perp=\h$ (and also with respecto to $\t\oplus\t^\perp=\h$) is codiagonal. The geodesic is then, as a curve of  subspaces
$$
\delta(t)=e^{itX}\s,
$$
or 
$$
\delta(t)=e^{itX}P_\s  e^{-itX} , \ \delta(t)=e^{itX}(2P_\s-1)  e^{-itX}
$$
as projections or symmetries. Regarded as symmetries, it has the useful computational feature, that since $X$ is codiagonal with respect to  $\s\oplus\s^\perp=\h$, $X$ anti-commutes with $2P_\s-1$. Therefore $\delta$ takes the alternative forms 
$$
\delta(t)=e^{2itX}(2P_\s-1)=(2P_\s-1)  e^{-2itX}.
$$
These facts were shown in \cite{pr}.

In our case,
$$
R(E)\cap R(E^*)^\perp=R(E)\cap N(E)=\{0\} \hbox{ and } R(E)^\perp\cap R(E^*)=N(E^*)\cap R(E^*)=\{0\},
$$
so that there exists a unique geodesic joining $P_{R(E)}$ and $P_{R(E^*)}$.

Also note that
\begin{equation}\label{B rango denso?}
R(E)\cap R(E^*)=\{f\in R(E): B^*f=0\}=N(B^*),
\end{equation}
and 
\begin{equation}\label{B nucleo trivial?}
R(E)^\perp\cap R(E^*)^\perp=\{g\in R(E)^\perp: Bg=0\}=N(B).
\end{equation}
Two subspaces $\s, \t$ are in {\it generic position} \cite{halmos} if $\s\cap\t=\s\cap\t^\perp=\s^\perp\cap\t=\s^\perp\cap\t^\perp=\{0\}$.
In general, we call {\it generic part} of $\s$ and $\t$ the subspace
$$
\h_0=\left(\s\cap\t \oplus \s\cap\t^\perp \oplus \s^\perp\cap\t \oplus \s^\perp\cap\t^\perp\right)^\perp.
$$
In our case $R(E)$ and $R(E^*)$ are in generic position  iff $B$ has trivial nullspace and dense range. The generic part of $R(E)$ and $R(E^*)$ 
$$
\h_0=\left(R(E)\cap R(E^*)\oplus R(E)^\perp\cap R(E^*)^\perp\right)^\perp=\left(N(B)\oplus N(B^*)\right)^\perp
$$
is trivial iff $B=0$, i.e. $E=E^*$ is an orthogonal projection. Since $E$ and $E^*$ coincide in $\h_0^\perp$, in order to characterize the unique geodesic between $P_{R(E)}$ and $P_{R(E^*)}$, we must focus on the subspace $\h_0$. 

\section{The distance between $R(E)$ and $R(E^*)$}

Let us first recall a result by D. Buckholtz \cite{buckholtz}:
\begin{rem} \cite{buckholtz} Let $\s,\t\subset\h$ be closed subspaces. Then the following are equivalent:
\begin{enumerate}
\item
$\s\dot{+}\t=\h$.
\item
$\|P_\s+P_\t-1\|<1$.
\item
$P_\s-P_\t$ is invertible.
\end{enumerate}
\end{rem}
As an immediate consequence we get
\begin{coro}
$\|P_{R(E)}-P_{R(E)}\|<1$.
\end{coro}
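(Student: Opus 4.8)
The plan is to apply Buckholtz's equivalence directly, after rewriting the quantity $\|P_{R(E)}-P_{R(E^*)}\|$ in the additive form $\|P_\s+P_\t-1\|$ that appears in the Remark. The key algebraic observation is that $N(E)=R(E^*)^\perp$, so that $P_{N(E)}=1-P_{R(E^*)}$, and hence
$$
P_{R(E)}+P_{N(E)}-1=P_{R(E)}-P_{R(E^*)}.
$$
Thus it suffices to verify condition (2) of the Remark for the pair $\s=R(E)$, $\t=N(E)$, namely that $\|P_{R(E)}+P_{N(E)}-1\|<1$.

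To obtain this I would invoke the implication (1)$\Rightarrow$(2). So the only thing to check is that $R(E)\dot{+}N(E)=\h$, i.e.\ that $\h$ decomposes as a topological direct sum of the range and nullspace of $E$. This is the defining feature of an idempotent: $R(E)$ is closed (being $N(1-E)$), $N(E)$ is closed, every $x\in\h$ splits as $x=Ex+(1-E)x$ with $Ex\in R(E)$ and $(1-E)x\in N(E)$, and $R(E)\cap N(E)=\{0\}$, a fact already recorded in the Preliminaries. Buckholtz's Remark then yields $\|P_{R(E)}+P_{N(E)}-1\|<1$, and the displayed identity finishes the proof.

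There is essentially no obstacle here, which is precisely why the statement is labelled an immediate consequence. The one point deserving a word of care is the passage $P_{N(E)}=1-P_{R(E^*)}$, which rests on the standard relation $N(E)=R(E^*)^\perp$; once this is in place, the conclusion is a one-line substitution into Buckholtz's criterion. One could alternatively read off the strict inequality, together with the explicit value $\|B\|/(1+\|B\|^2)^{1/2}<1$, from the spectral computation announced in Theorem \ref{teo 24}, but the route through Buckholtz is shorter and self-contained at this stage.
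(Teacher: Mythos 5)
Your proof is correct and follows exactly the paper's route: the author likewise sets $\s=R(E)$, $\t=N(E)=R(E^*)^\perp$, notes that condition (1) of Buckholtz's criterion holds for an idempotent, and reads off $\|P_{R(E)}+P_{N(E)}-1\|=\|P_{R(E)}-P_{R(E^*)}\|<1$. Your additional verification that $R(E)\dot{+}N(E)=\h$ and that $P_{N(E)}=1-P_{R(E^*)}$ merely spells out steps the paper leaves implicit.
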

\begin{proof}
Put $\s=R(E)$ and $\t=N(E)=R(E^*)^\perp$. Then the first condition is verified, and therefore the second says that
$$
\|P_{R(E)}+P_{N(E)}-1\|=\|P_{R(E)}-P_{N(E)^\perp}\|=\|P_{R(E)}-P_{R(E^*)}\|<1.
$$ 
\end{proof}
Let us now estimate this distance in terms of the operator $B=B_E:R(E)^\perp\to R(E)$ of (\ref{matriz E}).
To this effect, it will be useful to have formulas of $P_{R(E)}$ and $P_{R(E^*)}$. In \cite{ando}, it was shown that (note that $E+E^*-1$ is invertible)
\begin{equation}\label{ando1}
P_{R(E)}=E(E+E^*-1)^{-1},
\end{equation}
and 
\begin{equation}\label{ando2}
P_{R(E^*)}=E^*(E^*+E-1)^{-1}.
\end{equation}
Therefore
$$
P_{R(E)}-P_{R(E^*)}=(E-E^*)(E+E^*-1)^{-1}.
$$
Let us write this expression in terms of $B$ (see (\ref{matriz E})). To compute $(E+E^*-1)^{-1}$, note that
$$
E+E^*-1=\left(\begin{array}{cc} 1 & B \\ B^* & -1 \end{array} \right)
$$
and 
$$
(E+E^*-1)^2=\left(\begin{array}{cc} 1+BB^* & 0 \\ 0 & 1+B^*B \end{array} \right).
$$
Then 
$$
(E+E^*-1)^{-1}=\left(\begin{array}{cc} (1+BB^*)^{-1} & 0 \\ 0 & (1+B^*B)^{-1} \end{array} \right)
\left(\begin{array}{cc} 1 & B \\ B^* & -1 \end{array} \right)
$$
$$
=\left(\begin{array}{cc} (1+BB^*)^{-1} & (1+BB^*)^{-1}B \\ (1+B^*B)^{-1}B^* & -(1+B^*B)^{-1} \end{array} \right).
$$
Therefore
\begin{equation}\label{diferencia}
P_{R(E)}-P_{R(E^*)}=\left(\begin{array}{cc} B(1+B^*B)^{-1}B^* & -B(1+B^*B)^{-1} \\ -B^*(1+BB^*)^{-1} & -B^*(1+BB^*)^{-1}B \end{array} \right).
\end{equation}
Since  $B^*(BB^*)^k=(B^*B)^kB^*$ for any $k\ge 0$, we get that 
$$
B^*(1+BB^*)^{-1}B=(1+B^*B)^{-1}B^*B=B^*B(1+B^*B)^{-1}.
$$
 Similarly, 
$$
B(1+B^*B)^{-1}B^*=(1+BB^*)^{-1}BB^*=BB^*(1+BB^*)^{-1}.
$$
Then we have also 
\begin{equation}\label{diferencia2}
P_{R(E)}-P_{R(E^*)}=\left(\begin{array}{cc} BB^*(1+BB^*)^{-1} & -B(1+B^*B)^{-1} \\ -B^*(1+BB^*)^{-1} & -B^*B(1+BB^*)^{-1} \end{array} \right).
\end{equation}
We shall need the following result by G. Corach, H. Porta and L. Recht \cite{cpr} (Corollary 1.7):
\begin{rem}\label{corolario cpr}\cite{cpr} Put $\h_1=R(E)\cap N(E)^\perp$, $\h_2=R(E)^\perp\cap N(E)$, $\h_3=R(E)\ominus \h_1$ and $\h_4=R(E)^\perp\ominus\h_2$. Then, in the decomposition $\h=\h_1\oplus\h_2\oplus\h_3\oplus\h_4$, $E$ is written
$$
E=1\oplus 0 \oplus\left(\begin{array}{cc} 1 & -Q^{1/2}(1-Q)^{-1/2}D \\ 0 & 0 \end{array}\right).
$$
Here $Q$ is a positive contraction in $\h_3$ with $N(Q)=0$ and $1-Q$ invertible, and $D:\h_4\to\h_3$ is an isometric isomorphism.
\end{rem}
Denote by $R$ the selfadjoint operator
\begin{equation}\label{R o Q?}
R:=-Q^{1/2}(1-Q)^{-1/2}
\end{equation}.
Then, with these notations,  we have
\begin{teo}\label{teo 24}
Suppose that $R(E)$ and $N(E)$ are infinite dimensional.
Then 
$$
\sigma(P_{R(E)}-P_{R(E^*)})\setminus\{0\}=\{\pm \frac{t}{\sqrt{1+t^2}}: t\in\sigma(R), t\ne 0\}.
$$  

In particular,
$$
\|P_{R(E)}-P_{R(E^*)}\|= \frac{\|B\|}{(1+\|B\|^2)^{1/2}}.
$$
\end{teo}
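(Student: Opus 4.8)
The plan is to reduce everything to the single selfadjoint operator $R$ of (\ref{R o Q?}) by passing to the Corach--Porta--Recht normal form of Remark \ref{corolario cpr}, and then to read off the spectrum fibrewise through the spectral theorem for $R$.

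First I would dispose of the trivial directions. On $\h_1=R(E)\cap N(E)^\perp=R(E)\cap R(E^*)$ both $P_{R(E)}$ and $P_{R(E^*)}$ act as the identity, and on $\h_2=R(E)^\perp\cap N(E)=R(E)^\perp\cap R(E^*)^\perp$ both act as zero; hence $P_{R(E)}-P_{R(E^*)}$ vanishes on $\h_1\oplus\h_2$ and contributes only $\{0\}$ to the spectrum. The whole content therefore lies on the generic part $\h_0=\h_3\oplus\h_4$. There Remark \ref{corolario cpr} tells me that the operator $B$ of (\ref{matriz E}), restricted to this part, is $B|_{\h_4}=RD$ with $D:\h_4\to\h_3$ an isometric isomorphism (so $DD^*=1_{\h_3}$ and $D^*D=1_{\h_4}$) and $R=R^*$ acting on $\h_3$. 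A short computation then gives $BB^*=R^2$ on $\h_3$ and $B^*B=D^*R^2D$ on $\h_4$.

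Next I would substitute these identities into the block formula (\ref{diferencia}), using the relation $B(1+B^*B)^{-1}=(1+BB^*)^{-1}B$, so that every entry of the restriction of $P_{R(E)}-P_{R(E^*)}$ to $\h_3\oplus\h_4$ becomes a function of $R$ pre- or post-composed with $D$ or $D^*$. Conjugating by the unitary $V=1_{\h_3}\oplus D:\h_3\oplus\h_4\to\h_3\oplus\h_3$ and cancelling $DD^*=1_{\h_3}$, I expect to arrive at
$$
V\big(P_{R(E)}-P_{R(E^*)}\big)V^*=\left(\begin{array}{cc} R^2(1+R^2)^{-1} & -R(1+R^2)^{-1} \\ -R(1+R^2)^{-1} & -R^2(1+R^2)^{-1} \end{array}\right),
$$
a $2\times2$ block matrix all of whose entries are commuting functions of the single selfadjoint operator $R$.

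Finally I would invoke the spectral theorem for $R$ to diagonalize this operator as a direct integral over $\sigma(R)$ of the scalar $2\times2$ matrices with diagonal entries $\pm t^2/(1+t^2)$ and off-diagonal entries $-t/(1+t^2)$. Each such matrix has trace $0$ and eigenvalues $\pm\sqrt{a^2+b^2}$ with $a=t^2/(1+t^2)$ and $b=t/(1+t^2)$; since $a^2+b^2=t^2/(1+t^2)$, these are exactly $\pm\,t/\sqrt{1+t^2}$, and taking the union over the fibres yields the stated description of $\sigma(P_{R(E)}-P_{R(E^*)})\setminus\{0\}$. For the norm, the function $t\mapsto |t|/\sqrt{1+t^2}$ is increasing, so the supremum of the spectrum is attained as $|t|\to\|R\|=r(R)$ (spectral radius of the selfadjoint $R$), giving $\|P_{R(E)}-P_{R(E^*)}\|=\|R\|/\sqrt{1+\|R\|^2}$; and since $D$ is isometric, $\|B\|=\|RD\|=\|R\|$, which converts this to the advertised expression. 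I expect the only delicate point to be the spectral-theorem step---identifying the spectrum of the block operator with the union of the fibrewise eigenvalues and correctly accounting for the point $0$ (which is where the dimension hypotheses enter)---the rest being routine algebra once the normal form is in hand.
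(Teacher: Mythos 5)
Your proposal is correct, and its skeleton coincides with the paper's own Case 2: the paper likewise reduces to the generic part $\h_3\oplus\h_4$ via Remark \ref{corolario cpr}, conjugates by a unitary of exactly your form $1\oplus D$ to replace $B=RD$ by the selfadjoint $R$, and arrives at precisely the block matrix you display. Where you genuinely diverge is in the core spectral step. The paper (in its Case 1, with $B$ selfadjoint) factors that matrix as $ST$, where $S$ is a symmetry and $T=B(1+B^2)^{-1/2}\oplus B(1+B^2)^{-1/2}$ commutes with $S$, and computes $\sigma(ST)$ via characters of the commutative C$^*$-algebra generated by $S$ and $T$; since a character only yields $\varphi(S)\varphi(T)=\pm t/\sqrt{1+t^2}$ with an undetermined sign, the paper must then invoke Davis's theorem \cite{davis} that the spectrum of a difference of projections is symmetric about the origin to recover both signs. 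Your fibrewise computation makes that appeal unnecessary: each $2\times 2$ fibre matrix is traceless with eigenvalues $\pm t/\sqrt{1+t^2}$, so both signs occur at every $t\in\sigma(R)$, which is a small but real gain in self-containedness. To make your ``direct integral'' step airtight you need not literally disintegrate $\h_3$: your matrix lies in $M_2(C^*(1,R))\cong M_2(C(\sigma(R)))$, spectral permanence for C$^*$-subalgebras identifies its spectrum there with its spectrum in $\b(\h_3\oplus\h_3)$, and the spectrum of a selfadjoint element of $M_2(C(X))$ is the union of the fibre eigenvalues, which is already closed here because the eigenvalue functions $\pm t/\sqrt{1+t^2}$ are continuous on the compact set $\sigma(R)$. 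One small correction: the exclusion of $0$ is not where the infinite-dimensionality hypothesis enters; $0$ may appear in the spectrum either through the summand $\h_1\oplus\h_2$ (where the difference vanishes) or because $0\in\sigma(R)$ (note $N(R)=\{0\}$ by Remark \ref{corolario cpr}, but $R$ need not be bounded below), and the statement simply removes $0$ from both sides to sidestep this ambiguity.
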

\begin{proof}
{\bf Case 1.} Suppose first that $\h=\l\times \l$ and $B:\l\to\l$ satisfies $B^*=B$. Then, using (\ref{diferencia2}) we have
$$
P_{R(E)}-P_{R(E^*)}=\left( \begin{array}{cc} B^2(1+B^2)^{-1} & -B(1+B^2)^{-1} \\ -B(1+B^2)^{-1} & -B^2(1+B^2)^{-1} \end{array}\right).
$$
We can factorize  $P_{R(E)}-P_{R(E^*)}=ST$, where
$$
S=\left( \begin{array}{cc} B(1+B^2)^{-1/2} & -(1+B^2)^{-1/2} \\ -(1+B^2)^{-1/2} & -B(1+B^2)^{-1/2} \end{array}\right) \hbox{ and } T=\left( \begin{array}{cc} B(1+B^2)^{-1/2} & 0 \\ 0 & B(1+B^2)^{-1/2} \end{array}\right).
$$
The operator $S$ is a symmetry: $S^*=S$ and $S^2=1$. Also it is clear that $S$ and $T$ commute. Consider $\a$ the (commutative) C$^*$-algebra generated by $S$ and $T$. The spectrum $\sigma(P_{R(E)}-P_{R(E^*)})$ can be computed
$$
\sigma(P_{R(E)}-P_{R(E^*)})=\{\varphi(S)\varphi(T): \varphi \hbox{ is a character in } \a\}.
$$
Since $S$ is a symmetry, $\varphi(S)=\pm 1$. On the other hand $\varphi(T)$ takes (all possible values) in the spectrum of $T$, i.e., 
$\sigma(T)=\sigma(B(1+B^2)^{-1/2})=\{ \frac{t}{\sqrt{1+t^2}}: t\in\sigma(B)\}$. Since $P_{R(E)}-P_{R(E^*)}$ is a difference of orthogonal projections, its spectrum is symmetric with respect to the origin (see for instance \cite{davis}): save for $\pm 1$, which may or may not belong to this spectrum - and in this case they do not, because the norm of the difference is strictly less than $1$, we have that
$\lambda\in\sigma(P_{R(E)}-P_{R(E^*)})$ iff $-\lambda\in\sigma(P_{R(E)}-P_{R(E^*)})$. Therefore we have that in this case
$$
\sigma(P_{R(E)}-P_{R(E^*)})=\{\pm\frac{t}{\sqrt{1+t^2}}: t\in\sigma(B)\}.
$$ 
In particular, since the continuous function $f(t)=t(1+t^2)^{-1/2}$ is increasing for all $t\in\mathbb{R}$, and $B$ is selfadjoint (which implies that either $+\|B\|$ or $-\|B\|$ belong to $\sigma(B)$)  we have that 
$$
\|B(1+B^2)^{-1/2}\|=\sup\{|f(t)|: t\in\sigma(B)\}=\max\{-f(-\|B\|), f(\|B\|)\}=f(\|B\|)
$$
$$
=\frac{\|B\|}{(1+\|B\|^2)^{1/2}}.
$$
It follows that  $\|P_{R(E)}-P_{R(E^*)}\|=\frac{\|B\|}{(1+\|B\|^2)^{1/2}}$ in this case.

{\bf Case 2.} (General case): by the result of Corach, Porta and Recht (Corollary 1.7 in \cite{cpr}) recalled in Remark \ref{corolario cpr}, we have that in certain orthogonal decomposition $\h=\h_1\oplus\h_2\oplus\h_3\oplus\h_4$, $E$ can be written
$$
E=1\oplus 0 \oplus \left(\begin{array}{cc} 1 & RD \\ 0 & 0 \end{array} \right),
$$
where $R=-Q^{1/2}(1-Q)^{-1/2}$ is selfadjoint in $\h_3$ (as in Remark \ref{corolario cpr}), and $D:\h_4\to\h_3$ is a unitary transformation. Denote by $E'$ the idempotent $\left(\begin{array}{cc} 1 & RD \\ 0 & 0 \end{array} \right)$ in $\h_3\oplus \h_4$. Clearly, it suffices to establich our claim for $E'$ (on $\h_1\oplus\h_2$, $E$ is selfadjoint, and therefore coincides with $E^*$, $P_{R(E)}$ and $P_{R(E^*)}$). Consider the unitary operator ${\bf D}:\h_3\oplus \h_4\to \h_3\times\h_3$ given by
$$
{\bf D}(f+g)=(f, D^*g).
$$
Then ${\bf D}E'{\bf D}^*$ is a idempotent in $\h_3\times\h_3$, given by
$$
{\bf D}E'{\bf D}^*(f,g)={\bf D}E'(f+D^*g)={\bf D}\left(\begin{array}{cc} 1 & RD \\ 0 & 0 \end{array}\right)\left( \begin{array}{c} f \\ D^*g\end{array}\right) \begin{array}{c} \h_3 \\ \h_4 \end{array}= {\bf D} \left( \begin{array}{c} f+RDD^*g \\ 0 \end{array}\right) \begin{array}{c} \h_3 \\ \h_4 \end{array}
$$
$$
=(f+Rg,0),
$$
i.e., ${\bf D}E'{\bf D}^*=\left(\begin{array}{cc} 1 & R \\ 0 & 0\end{array} \right)$ in $\h_3\times\h_3$.  Since $({\bf D}E'{\bf D}^*)^*={\bf D}(E')^*{\bf D}^*$ and
$$
P_{R({\bf D}E'{\bf D}^*)}-P_{R({\bf D}(E')^*{\bf D}^*)}={\bf D}\left(P_{R(E')}-P_{R((E')^*)}\right){\bf D}^*,
$$
we have, by the previous case, 
$$
\sigma(P_{R(E')}-P_{R((E')^*})=\{\pm \frac{t}{\sqrt{1+t^2}}: t\in\sigma(R)\}, 
$$ 
and thus 
$$
\sigma(P_{R(E)}-P_{R((E)^*})\setminus\{0\}=\{\pm \frac{t}{\sqrt{1+t^2}}: t\in\sigma(R), t\ne 0\}.
$$ 
And also
$$
\|P_{R(E)}-P_{R((E)^*)}\|=\|P_{R(E')}-P_{R((E')^*)}\|=\frac{\|R\|}{(1+\|R\|^2)^{1/2}}=\frac{\|B\|}{(1+\|B\|^2)^{1/2}},
$$
because $\|B\|=\|RD\|=\|R\|$.
\end{proof}

\begin{rem}
We can write the spectrum $P_{R(E)}-P_{R((E)^*}$ in terms of $Q$ (instead of $R$).
Note that
$$
R(1+R^2)^{-1/2}=-Q^{1/2}.
$$
Thus we get 
$$
\sigma(P_{R(E)}-P_{R(E)^*})\setminus\{0\}=\{s: s^2\in\sigma(Q), s \ne 0\}.
$$ 
\end{rem}
\begin{rem}
The value $0$ may or may not lie in the spectrum of $P_{R(E)}-P_{R(E^*)}$. Indeed, since
$$
P_{R(E)}-P_{R(E^*)}=(E-E^*)(E+E^*-1)^{-1},
$$
it follows that $P_{R(E)}-P_{R(E^*)}$ is invertible if and only if $E-E^*$ is invertible. Note that 
$$
E-E^*=\left(\begin{array}{cc} 0 & B \\ -B^* & 0 \end{array}\right)
$$
is invertible if and only if $B:R(E)^\perp\to R(E)$ is an isomorphism.
\end{rem}

The geodesic distance $d_g(P,Q)$ between two projections $P$ and $Q$ is defined as the infinum of the lengths of all piecewise smooth curves joining $P$ and $Q$ inside $\p(\h)$, where the length $\ell(\gamma)$ of a piecewise smooth curve $\gamma$ parametrized in the interval $I$ is measured by
$$
\ell(\gamma)=\int_I \|\dot{\gamma}(t)\|dt,
$$
using the usual norm of $\b(\h)$. It has been shown that (see for instance \cite{paisanos}), that
$d_g(P,Q)=\arcsin\|P-Q\|$.
\begin{coro}
Let $E=\left(\begin{array}{cc} 1 & B \\ 0 & 0\end{array} \right)$, with $R(E)$ and $N(E)$ infinite dimensional. Then
$$
d_g(P_{R(E)},P_{R(E^*)})=\arctan \|B\|.
$$
\end{coro}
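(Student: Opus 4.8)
The plan is to combine the two ingredients already in place: the general identity $d_g(P,Q)=\arcsin\|P-Q\|$ recalled immediately above (valid whenever $\|P-Q\|<1$, so that the unique minimal geodesic exists) and the norm computation of Theorem \ref{teo 24}. First I would invoke the Corollary of Section 3, which guarantees $\|P_{R(E)}-P_{R(E^*)}\|<1$; hence the geodesic distance formula applies and gives
$$
d_g(P_{R(E)},P_{R(E^*)})=\arcsin\|P_{R(E)}-P_{R(E^*)}\|.
$$
Substituting the value $\|P_{R(E)}-P_{R(E^*)}\|=\frac{\|B\|}{(1+\|B\|^2)^{1/2}}$ furnished by Theorem \ref{teo 24}, the statement reduces to a purely trigonometric identity.

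The only remaining step is to verify that
$$
\arcsin\left(\frac{t}{\sqrt{1+t^2}}\right)=\arctan t, \qquad t\ge 0.
$$
This is immediate once one sets $\theta=\arctan t\in[0,\pi/2)$: then $\tan\theta=t$, and from $\cos\theta=(1+\tan^2\theta)^{-1/2}=(1+t^2)^{-1/2}$ together with $\sin\theta=\tan\theta\cos\theta$ one obtains $\sin\theta=t(1+t^2)^{-1/2}$. Since $\theta$ lies in the principal range of $\arcsin$, applying $\arcsin$ to both sides returns $\theta$, and taking $t=\|B\|\ge 0$ yields the claim.

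I expect no genuine obstacle here: all the analytic work has already been absorbed into Theorem \ref{teo 24} and the known fact $d_g=\arcsin\|\cdot\|$, so the corollary is a one-line consequence once the elementary identity $\arcsin(t/\sqrt{1+t^2})=\arctan t$ is recorded. The only point worth double-checking is that the hypotheses under which the geodesic distance formula holds are met, namely $\|P_{R(E)}-P_{R(E^*)}\|<1$, which is exactly the content of the Corollary in Section 3.
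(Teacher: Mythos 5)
Your proposal is correct and follows essentially the same route as the paper: both combine the formula $d_g(P,Q)=\arcsin\|P-Q\|$ with the norm value $\frac{\|B\|}{(1+\|B\|^2)^{1/2}}$ from Theorem \ref{teo 24}, and then resolve the elementary trigonometric identity (the paper computes $\cos(d)$ from $\sin(d)$ to get $\tan(d)=\|B\|$, while you set $\theta=\arctan\|B\|$ and verify $\sin\theta$ --- the same manipulation in the opposite direction). Your explicit check that $\|P_{R(E)}-P_{R(E^*)}\|<1$ via the Corollary of Section 3 is a careful touch the paper leaves implicit.
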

\begin{proof}
Denote $d=d_g(P_{R(E)},P_{R(E^*)})$. Note that $0\le d\le \pi/2$. We know that $\|P_{R(E)}-P_{R(E^*)}\|=\frac{\|B\|}{(1+\|B\|^2)^{1/2}}=\sin(d)$. Then $\cos(d)=(1-\sin(d)^2)^{1/2}=\frac{1}{(1+\|B\|)^{1/2}}$. Thus $\tan(d)=\|B\|$.
\end{proof}

\section{The geodesic between $R(E)$ and $R(E^*)$}

Since $\|P_{R(E)}-P_{R(E^*)}\|<1$, there exists a unique operator $X_E$, the velocity vector of the geodesic $\delta_E$ at $t=0$, such that $\delta_E(0)=P_{R(E)}$ and $\delta_E(1)=P_{R(E^*)}$. The operator $X_E$ thus satisfies
$$
X_E^*=X_E, \ \|X_E\|<\pi/2, \ X_E \hbox{ is  } R(E)\oplus R(E)^\perp \hbox{ co-diagonal},
$$
(meaning that its matrix with respect to this decomposition is co-diagonal, or equivalently, $X_E(R(E))\subset R(E)^\perp$ and $X_E(R(E)^\perp)\subset R(E)$) and
$$
e^{iX_E}R(E)=R(E^*).
$$
The geodesic si of the form $\delta_E(t)=e^{itX_E}P_{R(E)}e^{-itX_E}$ (and this last condition is \\ $e^{iX_E}P_{R(E)}e^{-iX_E}=P_{R(E^*)}$ (see \cite{cpr})).

For certain computations, it is useful to consider symmetries instead of projections:  to the closed subspace $\s\subset\h$ the symmetry $2P_\s-1$, which equals the identity in $\s$ and minus the indentity in $\s^\perp$.  This is the standpoint in \cite{pr}. The fact that $X_E$ is co-diagonal tranlates to the fact that $X_E$ anti-commutes with $2P_{R(E)}-1$. It follows that $e^{itX_e}(2P_{R(E)}-1)=(2P_{R(E)}-1) e^{-itX_E}$. Thus the geodesic $\epsilon(t)=2\delta(t)-1$ is given by (see\cite{pr})
$$
\epsilon(t)=e^{2itX_e}(2P_{R(E)}-1)=(2P_{R(E)}-1)e^{-2itX_E}.
$$
Let us compute the exponent $X_E$ associated to $E$ (with $R(E)$ and $N(E)$ infinite dimensional). First we analize the case when $\h=\l\times\l$ and $B:\l\to\l$ is selfadjoint.
\begin{lem}\label{lema B}
If $\h=\l\times\l$ and $E=\left(\begin{array}{cc} 1 & B \\ 0 & 0 \end{array}\right)$, with $B:\l\to\l$ selfadjoint, we have that
$$
X_E=\left(\begin{array}{cc} 0 & i \arctan(B)  \\  -i \arctan(B) & 0 \end{array} \right).
$$
\end{lem}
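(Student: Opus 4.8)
The plan is to verify directly that the operator
$$
X=\left(\begin{array}{cc} 0 & i\arctan(B) \\ -i\arctan(B) & 0 \end{array}\right)
$$
satisfies the four conditions that characterize the geodesic exponent $X_E$ — that $X^*=X$, that $\|X\|<\pi/2$, that $X$ is co-diagonal with respect to $R(E)\oplus R(E)^\perp$, and that $e^{iX}R(E)=R(E^*)$ — and then to appeal to the uniqueness of $X_E$ (which holds because $\|P_{R(E)}-P_{R(E^*)}\|<1$). Set $A:=\arctan(B)$; since $B$ is selfadjoint so is $A$, and $\|A\|=\sup\{|\arctan t|: t\in\sigma(B)\}<\pi/2$, because $\arctan$ maps $\mathbb{R}$ into $(-\pi/2,\pi/2)$ and $\sigma(B)$ is compact. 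In the decomposition $\h=\l\times\l$ we have $R(E)=\l\times\{0\}$ and $R(E)^\perp=\{0\}\times\l$.

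The three algebraic conditions are immediate: $X^*=X$ follows from $A^*=A$; co-diagonality holds since the diagonal blocks of $X$ vanish; and because $X^2=\left(\begin{array}{cc} A^2 & 0 \\ 0 & A^2\end{array}\right)$ we obtain $\|X\|=\|A\|<\pi/2$. The substantive point is the range condition. Using the power series together with the identity $(iX)^2=-\left(\begin{array}{cc} A^2 & 0 \\ 0 & A^2\end{array}\right)$, I would evaluate the exponential as
$$
e^{iX}=\left(\begin{array}{cc}\cos A & -\sin A \\ \sin A & \cos A\end{array}\right),
$$
so that $e^{iX}(f,0)=(\cos(A)f,\sin(A)f)$ and therefore $e^{iX}R(E)=\{(\cos(A)f,\sin(A)f): f\in\l\}$.

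On the other side, since $B^*=B$, the adjoint $E^*=\left(\begin{array}{cc} 1 & 0 \\ B & 0\end{array}\right)$ has range $R(E^*)=\{(h,Bh): h\in\l\}$, the graph of $B$, which is closed because the range of any idempotent is closed. The equality of the two subspaces then rests on the functional-calculus identity $\tan(\arctan B)=B$, equivalently $\sin A=B\cos A$, where $\cos A$ is invertible because $\|A\|<\pi/2$ forces $\sigma(A)\subset(-\pi/2,\pi/2)$ and hence $\cos A\ge\cos\|A\|>0$. Writing $h=\cos(A)f$ gives $(\cos(A)f,\sin(A)f)=(h,Bh)\in R(E^*)$; and as $f$ runs over $\l$ the vector $h=\cos(A)f$ runs over all of $\l$ by invertibility of $\cos A$, so in fact $e^{iX}R(E)=R(E^*)$.

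The main obstacle is the clean computation of the operator exponential and, above all, the recognition that $R(E^*)$ is precisely the graph of $B$; once that is seen, everything reduces to the scalar-type identity $\sin A=B\cos A$ supplied by functional calculus. Having checked all four defining conditions, the uniqueness of the geodesic exponent forces $X=X_E$, which is the claim.
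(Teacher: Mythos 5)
Your proof is correct, but it takes a genuinely different route from the paper's. The paper argues constructively: using that $X_E$ anti-commutes with $2P_{R(E)}-1$, it writes $e^{2iX_E}=(2P_{R(E^*)}-1)(2P_{R(E)}-1)$, computes this product as an explicit $2\times 2$ matrix in $B$ (via the formulas $P_{R(E)}=E(E+E^*-1)^{-1}$, $P_{R(E^*)}=E^*(E+E^*-1)^{-1}$ from \cite{ando}), recognizes the rotation structure $\cos Z=(1-B^2)(1+B^2)^{-1}$, $\sin Z=2B(1+B^2)^{-1}$, and extracts $X_E$ from the half-angle identity $\frac12\arcsin\bigl(\frac{2t}{1+t^2}\bigr)=\arctan(t)$. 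You instead guess the exponent and verify the four defining conditions, reducing the endpoint condition $e^{iX}R(E)=R(E^*)$ to the observation that $R(E^*)$ is the graph of $B$ together with the functional-calculus identity $\sin(\arctan B)=B\cos(\arctan B)$ and the invertibility of $\cos(\arctan B)$; all of your individual steps check out (the block computation of $e^{iX}$, the identification $E^*=\left(\begin{array}{cc} 1 & 0\\ B & 0\end{array}\right)$, and the surjectivity argument via $h=\cos(A)f$). Your appeal to uniqueness is legitimate, since the paper establishes at the start of Section 4 (from \cite{pr}, \cite{cpr}) that a selfadjoint, $R(E)$-codiagonal $X$ with $\|X\|<\pi/2$ carrying $R(E)$ to $R(E^*)$ is unique when $\|P_{R(E)}-P_{R(E^*)}\|<1$. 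What your approach buys is economy: you never need the explicit matrices of $P_{R(E)}$ and $P_{R(E^*)}$, and the verification is essentially scalar functional calculus. What the paper's derivation buys is that it produces $X_E$ without knowing the answer in advance, and its intermediate output (the explicit $e^{2iX_E}$ and the cosine/sine matrices) is reused verbatim in the midpoint computation of Remark \ref{punto medio} and in the comparison with the matched projection --- though since you computed $e^{iX}$ explicitly along the way, that loss is minor.
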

\begin{proof}
In view of the above comment, we have that 
$e^{2iX_e}(2P_{R(E)}-1)=2P_{R(E^*)}-1$,  i.e.,
$$
e^{2iX_E}=(2P_{R(E^*)}-1)(2P_{R(E)}-1)=\left(2E^*(E+E^*-1)^{-1}-1\right)(2P_{R(E)}-1)
$$
$$
=(E^*-E+1)(E+E^*-1)^{-1}(2P_{R(E)}-1)
=\left(\begin{array}{cc} 1 & -B \\ B & 1 \end{array}\right)\left(\begin{array}{cc} 1 & B \\ B & -1 \end{array}\right)^{-1}\left(\begin{array}{cc} 1 & 0 \\ 0 & -1 \end{array}\right)
$$
$$
=\left(\begin{array}{cc} 1 & -B \\ B & 1 \end{array}\right)\left(\begin{array}{cc} 1 & B \\ B & -1 \end{array}\right)\left(\begin{array}{cc} (1+B^2)^{-1} & 0 \\ 0 & (1+B^2)^{-1} \end{array}\right)\left(\begin{array}{cc} 1 & 0 \\ 0 & -1 \end{array}\right),
$$
using that $\left(\begin{array}{cc} 1 & B \\ B & -1 \end{array}\right)^{-1}=\left(\begin{array}{cc} 1 & B \\ B & -1 \end{array}\right)\left(\begin{array}{cc} (1+B^2)^{-1} & 0 \\ 0 & (1+B^2)^{-1} \end{array}\right)$. Since the right hand two matrices in the product above commute, we have
$$
e^{2iX_E}=\left(\begin{array}{cc} 1 & -B \\ B & 1 \end{array}\right)\left(\begin{array}{cc} 1 & B \\ B & -1 \end{array}\right)\left(\begin{array}{cc} 1 & 0 \\ 0 & -1 \end{array}\right)\left(\begin{array}{cc} (1+B^2)^{-1} & 0 \\ 0 & (1+B^2)^{-1} \end{array}\right)
$$
$$
=\left(\begin{array}{cc} (1-B^2)(1+B^2)^{-1} & -2B(1+B^2)^{-1} \\ 2B(1+B^2)^{-1} & (1-B^2)(1+B^2)^{-1} \end{array}\right).
$$
Note that $C=:(1-B^2)(1+B^2)^{-1}$ and $S:=2B(1+B^2)^{-1}$ are selfadjoint operators, with $-1+d\le C\le 1-d$, $-1+d\le S\le 1-d$ for some $0<d<1$, and $C^2+S^2=1$. Then there exists $Z^*=Z$ in $\b(\l)$, $\|Z\|<\pi/2$ such that $C=\cos(Z)$ and $S=\sin(Z)$. Thus
$$
e^{2iX_E}=\left(\begin{array}{cc} \cos(Z) & -\sin(Z) \\ \sin(Z) & \cos(Z)\end{array}\right), \hbox{ i.e. } , X_E=\left(\begin{array}{cc} 0 & \frac{i}{2}Z \\ -\frac{i}{2}Z & 0 \end{array}\right).
$$
Finally, note that $\frac12 Z=\frac12\arcsin(2B(1+B^2))=\arctan(B)$, by the functional equality 
$$
\frac12 \arcsin(\frac{2t}{1+t^2})=\arctan(t)
$$
valid for all $t\in\mathbb{R}$.

\end{proof}

We analize now the general case. Again we invoke the result by Corach, Porta and Recht \cite {cpr}  (Remark \ref{corolario cpr} above):  in the orthogonal decomposition $\h=\h_1\oplus\h_2\oplus\h_3\oplus\h_4$, $E=1\oplus 0 \oplus \left(\begin{array}{cc} 1 & RD \\ 0 & 0 \end{array} \right)$,
where $R$ is selfadjoint in $\h_3$  and $D:\h_4\to\h_3$ is a unitary transformation. 
Then, using this decomposition, 
$$
E^*=1 \oplus 0 \oplus \left(\begin{array}{cc} 1 &  0 \\ D^*R & 0 \end{array} \right)
$$
and thus
$$
e^{2iX_E}=(E^*-E+1)(E+E^*-1)^{-1}(2P_{R(E)}-1)
$$
$$
=1\oplus 1\oplus ((E')^*-E'+1)(E'+(E')^*-1)^{-1}(2P_{R(E')}-1),
$$
where again 
$$
E'=\left(\begin{array}{cc} 1 & RD \\ 0 & 0 \end{array} \right)={\bf D}^*\left(\begin{array}{cc} 1 & R \\ 0 & 0 \end{array} \right){\bf D}
$$
for the unitary ${\bf D}:\h_3\oplus \h_4\to \h_3\times\h_3$, ${\bf D}(f+g)=(f, D^*g)$.
Then 
$
((E')^*-E'+1)(E'+(E')^*-1)^{-1}(2P_{R(E')}-1)$
equals
$$
{\bf D}^*\left(\begin{array}{cc} (1-R^2)(1+R^2)^{-1} & -2R(1+R^2)^{-1} \\ 2R(1+R^2)^{-1} & (1-R^2)(1+R^2)^{-1} \end{array}\right){\bf D}.
$$
Therefore we have
\begin{coro}\label{coro 32}
Let $E$ be an idempotent with $R(E)$ and $N(E)$ infinite dimensional. Then, with the current notations, the exponent $X_E$ of the unique geodesic in $\p(\h)$ joining $P_{R(E)}$ a $t=0$ and $P_{R(E^*)}$ at $t=1$ is
$$
X_E= 0 \oplus 0 \oplus \left(\begin{array}{cc} 0 & i\arctan(R)D \\ -iD^*\arctan(R) & 0 \end{array}\right).
$$  
\end{coro}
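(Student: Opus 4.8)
The plan is to reduce everything to Lemma \ref{lema B} by transporting it through the coarse block decomposition $\h=(\h_1\oplus\h_2)\oplus(\h_3\oplus\h_4)$ and through the unitary ${\bf D}$. First I would record that all the relevant operators ($E$, $E^*$, $(E+E^*-1)^{-1}$, $P_{R(E)}$) are block-diagonal with respect to this coarse splitting, so that $e^{2iX_E}$ is too; as already displayed just before the statement, it equals $1\oplus 1\oplus(\cdots)$ on $\h_1\oplus\h_2$. Hence $e^{2iX_E}|_{\h_1\oplus\h_2}=1$, and since $X_E$ is selfadjoint with $\|X_E\|<\pi/2$ this forces $X_E|_{\h_1\oplus\h_2}=0\oplus 0$ (equivalently: there $E$ is selfadjoint, $R(E)=R(E^*)$, and the unique geodesic is constant). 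All the content therefore sits on $\h_3\oplus\h_4$, where $E$ agrees with $E'=\left(\begin{array}{cc} 1 & RD \\ 0 & 0 \end{array}\right)$.

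On $\h_3\oplus\h_4$ I would exploit the identity $E'={\bf D}^*\left(\begin{array}{cc} 1 & R \\ 0 & 0 \end{array}\right){\bf D}$ with $R=R^*$. Conjugation by the unitary ${\bf D}$ is a $*$-isomorphism carrying $\p(\h_3\oplus\h_4)$ isometrically onto $\p(\h_3\times\h_3)$ and preserving the Porta--Recht connection and its Finsler structure; it therefore maps the unique geodesic attached to the selfadjoint idempotent $\left(\begin{array}{cc} 1 & R \\ 0 & 0 \end{array}\right)$ to the unique geodesic attached to $E'$, and conjugates the corresponding exponents. By Lemma \ref{lema B} with $B=R$, the exponent of the former is $\tilde{X}=\left(\begin{array}{cc} 0 & i\arctan(R) \\ -i\arctan(R) & 0 \end{array}\right)$, so I would conclude $X_E|_{\h_3\oplus\h_4}={\bf D}^*\tilde{X}{\bf D}$. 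This is consistent with the displayed formula for $e^{2iX_E}$: the inner matrix there is exactly $e^{2i\tilde{X}}$, and since conjugation by ${\bf D}$ commutes with the functional calculus one gets $e^{2iX_E}|_{\h_3\oplus\h_4}=e^{2i\,{\bf D}^*\tilde{X}{\bf D}}$, from which ${\bf D}^*\tilde{X}{\bf D}$ is read off as the exponent.

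To finish I would compute ${\bf D}^*\tilde{X}{\bf D}$ explicitly. Using ${\bf D}^*(u,v)=u+D^*v$ (and hence ${\bf D}(f+g)=(f,Dg)$ for $f\in\h_3$, $g\in\h_4$), applying ${\bf D}$, then $\tilde{X}$, then ${\bf D}^*$ sends an input $g\in\h_4$ to $i\arctan(R)Dg\in\h_3$ and an input $f\in\h_3$ to $-iD^*\arctan(R)f\in\h_4$. Reading off the $2\times 2$ matrix with respect to $\h_3\oplus\h_4$ yields precisely the asserted co-diagonal form.

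To make the ``read off'' step rigorous I would check that the candidate ${\bf D}^*\tilde{X}{\bf D}$ satisfies the characterizing conditions of \cite{pr}: it is selfadjoint (as ${\bf D}$ is unitary and $\tilde{X}^*=\tilde{X}$), its norm equals $\arctan\|R\|<\pi/2$, and it is co-diagonal with respect to $R(E')\oplus R(E')^\perp=\h_3\oplus\h_4$ since its matrix is off-diagonal. The only point genuinely needing care is the legitimacy of recovering the exponent from $e^{2iX_E}$, which rests on the injectivity of $X\mapsto e^{2iX}$ on selfadjoint operators of norm $<\pi/2$ (their spectra lie in $(-\pi/2,\pi/2)$); this is exactly the uniqueness of the geodesic exponent. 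I expect the only fiddly part to be the domain bookkeeping in the final conjugation---keeping straight the roles of $\h_3,\h_4$ and of $D,D^*$---while the conceptual core is simply the unitary transport of Lemma \ref{lema B}.
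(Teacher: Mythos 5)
Your proposal is correct and follows essentially the same route as the paper: the Corach--Porta--Recht decomposition $\h=\h_1\oplus\h_2\oplus\h_3\oplus\h_4$, the observation that $e^{2iX_E}=1$ forces $X_E=0$ on $\h_1\oplus\h_2$, and the unitary transport by ${\bf D}$ reducing to Lemma \ref{lema B} with $B=R$, followed by the conjugation computation ${\bf D}^*\tilde{X}{\bf D}$. You are in fact somewhat more careful than the paper on two points it leaves implicit --- the injectivity of $X\mapsto e^{2iX}$ on selfadjoints of norm $<\pi/2$, and the convention ${\bf D}(f+g)=(f,Dg)$, which is the one the paper actually uses in its computations despite its displayed definition ${\bf D}(f+g)=(f,D^*g)$.
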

\begin{proof}
In $\h_1\oplus\h_2$, $e^{2iX_E}$ equals the identity, so $X_E=0$ there. The rest of $X_E$ is deduced from Lemma \ref{lema B}:
$$
{\bf D}^*\left(\begin{array}{cc} 0 & i\arctan(R) \\ -i\arctan(R) & 0 \end{array}\right) {\bf D}=\left(\begin{array}{cc} 0 & i\arctan(R)D \\ -iD^*\arctan(R) & 0 \end{array}\right).
$$
\end{proof}

\begin{rem}\label{punto medio}
We can therefore compute de midpoint $\gamma(\frac12)$ of the geodesic joining $P_{R(E)}$ and $P_{R(E^*)}$. We choose to write the corresponding symmetry $2\gamma(\frac12)-1=e^{iX_E}(2P_{R(E)}-1)$.
Note that in the decomposition $\h=\bigoplus_{i=1}^4 \h_i$ given in Remark \ref{corolario cpr}, we have that 
$$
e^{iX_E}=1\oplus 1 \oplus e^{\left( \begin{array}{cc} 0 & -\arctan(R) D \\ D^* \arctan(R) & 0 \end{array} \right)},
$$
which after elementary computations, equals
$$
1\oplus 1 \oplus \left( \begin{array}{cc} \cos\left(\arctan(R)\right) & -\sin\left(\arctan(R)\right) D \\  D^* \sin\left(\arctan(R)\right) & D^* \cos\left(\arctan(R)\right) D \end{array} \right).
$$
Since $\cos(\arctan(t))=\frac{1}{\sqrt{1+t^2}}$ and $\sin(\arctan(t))=\frac{t}{\sqrt{1+t^2}}$ we get that this expression above equals
$$
1\oplus 1 \oplus \left( \begin{array}{cc} (1+R^2)^{-1/2} & -R(1+R^2)^{-1/2}  D \\  D^* R(1+R^2)^{-1/2}  & D^* (1+R^2)^{-1/2} D \end{array} \right).
$$
Therefore we get 
\begin{equation}\label{midpoint}
2\gamma(\frac12)-1=
1\oplus 1 \oplus \left( \begin{array}{cc} (1+R^2)^{-1/2} & R(1+R^2)^{-1/2}  D \\  D^* R(1+R^2)^{-1/2}  & -D^* (1+R^2)^{-1/2} D \end{array} \right).
\end{equation}
In the special case when $\h=\l\times\l$ and $B:\l\to\l$ is selfadjoint, we have
\begin{equation}\label{midpointbis}
2\gamma(\frac12)-1=\left( \begin{array}{cc} (1+B^2)^{-1/2} & B(1+B^2)^{-1/2}  \\  B(1+B^2)^{-1/2}  & - (1+B^2)^{-1/2}  \end{array} \right).
\end{equation}
\end{rem}

\section{The matched projection}

In \cite{TianXuFu}, X. Tian, Q. Xu and C. Fu defined the {\it matched projection}  $m(E)$ associated to an idempotent $E$. It has remarkable properties with respect to the operator order (see also \cite{ZhaoFangLi}). For $E=\left(\begin{array}{cc} 1 & B \\ 0 & 0\end{array}\right)$  in the decompostion $\h=R(E)\oplus R(E)^\perp$, $m(E)$  is defined as
\begin{equation}\label{matched}
m(E)=\frac12\left( \begin{array}{cc} (T+1)T^{-1} & T^{-1}B \\ B^*T^{-1} & B^*\left( T(T+1)\right)^{-1}B \end{array} \right) ,  \ \hbox{ for } T:=(1+BB^*)^{1/2}.
\end{equation} 
\begin{rem}\label{props matched}
Let us state some of the properties of $m(E)$ (see \cite{TianXuFu}).
\begin{enumerate}
\item
For any orthogonal projection $P$,
$$
\|m(E)-E\|\le \|P-E\|\le \|1-m(E)-E\|.
$$
\item
$\|m(E)-E\|=\frac12\{\|E\|-1+\sqrt{\|E\|^2-1}\}$.
\item
$m(E)=m(E^*)$, $m(1-E)=1-m(E)$.
\item
If $\k$ is a Hilbert space and $\Phi:\b(\h)\to\b(\k)$ is a $*$-homomorphism, then
$m(\Phi(E))=\Phi(m(E))$.
\end{enumerate}
\end{rem}
The matched projection has the following geometric interpretation:
\begin{teo}
Let $E$ be an idempotent, and $\gamma_E$ the unique geodesic of $\p(\h)$ such that $\gamma_E(0)=P_{R(E)}$ and $\gamma_E(1)=P_{R(E^*)}$. Then
$$
m(E)=\gamma_E(\frac12).
$$
\end{teo}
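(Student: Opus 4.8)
The plan is to compare the two explicit formulas already at our disposal: the matched projection given by (\ref{matched}), and the midpoint of the geodesic computed in Remark \ref{punto medio}. Since Remark \ref{punto medio} records the symmetry $2\gamma_E(\frac12)-1$ rather than $\gamma_E(\frac12)$ itself, the first step is to pass from symmetry to projection via $\gamma_E(\frac12)=\frac12\left(1+(2\gamma_E(\frac12)-1)\right)$, and then to check that the resulting matrix coincides with (\ref{matched}) entry by entry. I would organize the verification in the two cases used for Theorem \ref{teo 24} and Corollary \ref{coro 32}: first the model case $\h=\l\times\l$ with $B=B^*$, and then the general case via the Corach--Porta--Recht reduction of Remark \ref{corolario cpr}.

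In the self-adjoint model $T=(1+BB^*)^{1/2}=(1+B^2)^{1/2}$ commutes with $B$, and (\ref{midpointbis}) gives $\gamma_E(\frac12)$ with $(1,1)$ entry $\frac12(1+T^{-1})$, off-diagonal entries $\frac12 BT^{-1}$, and $(2,2)$ entry $\frac12(1-T^{-1})$. Comparing with (\ref{matched}), the $(1,1)$ entry matches because $(T+1)T^{-1}=1+T^{-1}$, and the off-diagonal entries match because $B=B^*$ commutes with $T^{-1}$. The only entry requiring an identity is the $(2,2)$ entry: I would use $B^2=T^2-1=(T-1)(T+1)$ to compute $B^*\left(T(T+1)\right)^{-1}B=(T-1)(T+1)\left(T(T+1)\right)^{-1}=(T-1)T^{-1}=1-T^{-1}$; after the common factor $\frac12$ this is precisely the $(2,2)$ entry of $\gamma_E(\frac12)$.

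For the general case I would invoke Remark \ref{corolario cpr} to write $\h=\h_1\oplus\h_2\oplus\h_3\oplus\h_4$ with $E=1\oplus 0\oplus E'$, where $E'=\left(\begin{array}{cc}1 & RD\\0&0\end{array}\right)$, $R=R^*$ and $D$ unitary. On $\h_1\oplus\h_2$ the idempotent is already an orthogonal projection, so $R(E)=R(E^*)$, the geodesic is constant, and $m(E)=E=\gamma_E(\frac12)$ there trivially. On $\h_3\oplus\h_4$, the unitary $\mathbf{D}$ of the previous sections carries $E'$ to the model idempotent $\left(\begin{array}{cc}1 & R\\0&0\end{array}\right)$, and I would transport the equality established in the self-adjoint case back through $\mathbf{D}$, using that both sides are natural under unitary conjugation. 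For the matched projection this naturality is item 4 of Remark \ref{props matched} (matched projections commute with $*$-homomorphisms, hence with unitary conjugation and with compression to a reducing subspace), while for the geodesic midpoint it follows from the fact that conjugating the minimal geodesic by a unitary yields the minimal geodesic between the conjugated endpoints.

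I expect the main obstacle to be bookkeeping rather than analysis: reconciling the block decomposition $R(E)\oplus R(E)^\perp$ underlying (\ref{matched}) with the decomposition $\bigoplus_i\h_i$ of Remark \ref{corolario cpr}, so that the reduction to the model case is legitimate and the operator $T$ is correctly identified on each summand. Once the naturality of $m$ under the homomorphism and the compression is invoked, this is routine, and the genuine content of the statement is the single identity $B^2=(T-1)(T+1)$ that settles the self-adjoint case.
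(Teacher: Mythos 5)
Your proposal is correct and follows essentially the same route as the paper: verify the equality in the selfadjoint model $\h=\l\times\l$ by comparing (\ref{matched}) with (\ref{midpointbis}) (your identity $B^2=(T-1)(T+1)$ is exactly the paper's ``elementary computations'' there), then reduce the general case through the decomposition of Remark \ref{corolario cpr} and the unitary ${\bf D}$. The only, harmless, variation is that on $\h_3\oplus\h_4$ you transport $m$ through ${\bf D}$ via the naturality property of Remark \ref{props matched}(4), whereas the paper instead computes $2m(E')-1$ directly from (\ref{matched}) with $B=RD$ non-selfadjoint (formula (\ref{uno})) and checks it against the transported midpoint (\ref{dos}).
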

\begin{proof}
As before, we consider first the case  $\h=\l\times\l$ and $B:\l\to\l$ selfadjoint.
Then the symmetry $2m(E)-1$ associated to $m(E)$ is
$$
\left( \begin{array}{cc} \left((1+B^2)^{1/2}+1\right)(1+B^2)^{-1} -1 & (1+B^2)^{-1}B \\ B(1+B^2)^{-1} & B \left( (1+B^2)^{1/2}\left((1+B^2)^{1/2}+1\right)\right)^{-1} B-1 \end{array}\right)
$$
which after elementary computations equals
$$
\left( \begin{array}{cc} (1+B^2)^{-1/2} & B(1+B^2)^{-1/2}  \\ B(1+B^2)^{-1/2} & -(1+B^2)^{-1/2}\end{array}\right)
$$
which is precisely the expression of $2\delta_E(\frac12)-1$ given in (\ref{midpointbis})  in Remark \ref{punto medio}. 

Consider now the general case, and again the decomposition $\h=\bigoplus_{i=1}^4\h_i$ in Remark \ref{corolario cpr}. In $\h_1\oplus\h_2$, $E$ is an orthogonal projection, and thus $m(E)=E=P_{R(E)}=P_{R(E^*)}$. Let us analize $\h_3\oplus\h_4$, and denote  by $E'$ the reduction of $E$ there. Here $E'=\left(\begin{array}{cc} 1 & RD \\ 0 & 0 \end{array}\right)$, and according to (\ref{matched})  (see \cite{TianXuFu}): for $T:=(1+BB^*)^{1/2}$, after elementary computations
$$
2 m(E')-1=\left( \begin{array}{cc} (T+1)T^{-1}-1 & T^{-1}B \\ B^*T^{-1} & B^*\left( T(T+1)\right)^{-1}B -1\end{array} \right)
$$
$$
=\left( \begin{array}{ccc} (1+BB^*)^{-1/2} & & (1+BB^*)^{-1/2} \\  B^*(1+BB^*)^{-1/2} & & B^*\left( (1+BB^*)^{1/2}\left((1+BB^*)^{1/2}+1\right)\right)^{-1}-1 \end{array}\right).
$$
Note that for any $k\ge 0$, $(BB^*)^kB=B(B^*B)^k$, and thus for any continuous function $g$ in the spectrum of $BB^*$ and $B^*B$, one has $g(BB^*)B=Bg(B^*B)$. Therefore the $2,2$ entry of the matrix above equals
$$
B^*\left( (1+BB^*)^{1/2}\left((1+BB^*)^{1/2}+1\right)\right)^{-1}-1
$$
$$
=B^*B\left( (1+B^*B)^{1/2}\left((1+B^*B)^{1/2}+1\right)\right)^{-1}-1=-(1+B^*B)^{-1/2},
$$
after elementary computations. That is
\begin{equation}\label{uno}
2m(E')-1=\left( \begin{array}{cc} (1+BB^*)^{-1/2} & (1+BB^*)^{-1/2}B \\ B^*(1+BB^*)^{-1/2} & -(1+B^*B)^{-1/2} \end{array}\right).
\end{equation}

On the other hand, in $\h_3\oplus\h_4$ (using the unitary ${\bf D}$ above)
$$
2\delta_{E'}(\frac12)-1=e^{iX_{E'}}(2P_{R(E')}-1)=e^{\left(\begin{array}{cc} 0 & i \arctan(R) D \\ -i D^* \arctan(R) & 0 \end{array}\right)} (2P_{R(E')}-1)
$$
$$
={\bf D}^* e^{\left(\begin{array}{cc} 0 & i \arctan(R) D \\ -i D^* \arctan(R) & 0 \end{array}\right)} {\bf D}  (2P_{R(E')}-1).
$$
Note that ${\bf D}(2P_{R(E')}-1)=\left(\begin{array}{cc} 1 & 0 \\ 0 & -1 \end{array} \right) {\bf D}$. Then, using the first case
$$
2\delta_{E'}(\frac12)-1={\bf D}^* \left( \begin{array}{cc} (1+R^2)^{-1/2} & (1+R^2)R \\ -R(1+R^2)^{-1/2} & (1+R^2)^{-1/2} \end{array} \right)\left(\begin{array}{cc} 1 & 0 \\ 0 & -1\end{array} \right) {\bf D}
$$
\begin{equation}\label{dos}
=\left( \begin{array}{cc} (1+R^2)^{-1/2} &  (1+B^2)^{-1/2}RD \\ D^*R(1+R^2)^{-1/2} & -D^*(1+R^2)^{-1/2}D\end{array}\right).
\end{equation}
Since $B=RD$, we have 
\begin{itemize}
\item
$(1+BB^*)^{-1/2}=(1+RDD^*R)^{-1/2}=(1+R^2)^{-1/2}$;
\item
 $(1+BB^*)^{-1/2}B=(1+R^2)^{-1/2}RD$;
\item
$B^*(1+BB^*)^{-1/2}=D^*R(1+R^2)^{-1/2}$; and
\item
$-(1+B^*B)^{-1/2}=-(1+D^*R^2D)^{-1/2}=-D^*(1+R^2)^{-1/2}D$.
\end{itemize}
Therefore (\ref{uno}) equals (\ref{dos}).
\end{proof}
\begin{rem}
Recall the properties of the matched projection cited in Remark \ref{props matched}.
For instance, if $\delta_E$ is the unique minimal geodesic such that $\delta_E(0)=P_{R(E)}$ and $\delta_E(1)=P_{R(E^*)}$, then $\delta_E^\perp(t)=1-\delta_E(t)$ is the unique minimal geodesic joining $\delta_E^\perp(0)=1-P_{R(E)}=P_{N(E^*)}$ and $\delta_E^\perp(1)=1-P_{R(E^*)}=P_{N(E)}$. It follows that the midpoints $\delta_E(\frac12)$ of $P_{R(E)}$, $P_{R(E^*)}$ and $\delta_E^\perp(\frac12)$ of $P_{N(E)}$, $P_{N(E^*)}$, satisfy
$$
\|\gamma(\frac12)-E\|\le\|P-E\|\le\|\gamma^\perp(\frac12)-E\|,
$$
for all $P\in\p(\h)$.

Note also that the properties $m(E)=m(E^*)$ and $m(1-E)=1-m(E)$ are (geometrically) evident. Indeed, in the first case, $m(E)$ is the midpoint of the unique geodesic between $P_{R(E)}$ and $P_{R(E^*)}$, which is the same as the midpoint beween $P_{R(E^*)}$ and $P_{R(E)}$, i.e., $m(E^*)$. In the latter, $m(1-E)$ is the midpoint beween $P_{R(1-E)}=P_{N(E)}=P_{R(E^*)}^\perp$ and $P_{R(1-E^*)}=P_{R(E)}^\perp$. Now the geodesic between $P^\perp$ and $Q^\perp$ is $1-\delta$, where $\delta$ is the geodesic between $P$ and $Q$. The asssertion follows.

Finally, if $\Phi:\b(\h)\to\b(\k)$ is a unital $*$-homomorphism, and $\delta(t)=e^{itX}P_{R(E)}e^{-itX}$ is the  geodesic joining $\delta(0)=P_{R(E)}$ and $\delta(1)=P_{R(E^*)}$, then $$
\Phi(\delta(t))=\Phi(e^{itX}P_{R(E)}e^{-itX})=e^{it\Phi(X)}\Phi(P_{R(E)})e^{-it\Phi(X)}.
$$
Note $\Phi(\delta)$ is also a geodesic: $\Phi(X)$ is selfadjoint and $\Phi(P_{R(E)})$-codiagonal. And also, due to the formulas (\ref{ando1}) and (\ref{ando2}), 
$$
\Phi(P_{R(E)})=\Phi(E(E+E^*-1)^{-1})=\Phi(E)(\Phi(E)+\Phi(E)^*-1)^{-1}=P_{R(\Phi(E))},
$$
and similarly $\Phi(P_{R(E^*)})=P_{R(\Phi(E)^*)}$. So that
$\Phi(m(E))=\Phi(\delta(\frac12))=m(\Phi(E))$.

\end{rem}
\section{Computation of the midpoint in terms of Halmos' theory and Davis' symmetry}

P. Halmos \cite{halmos} proved that given two projections $P,Q$ in generic position in a Hilbert space $\h$, there exists a unitary isomorphism between $\h$ and a product space $\l\times\l$, and a positive operator $X$ in $\l$ with $N(X)=\{0\}$ and $\|X\|\le\pi/2$ such that, the projections are carried (via this isomorphism) to
$$
P\simeq\left(\begin{array}{cc} 1 & 0 \\ 0 & 0 \end{array} \right) \ \hbox{ and } \ Q\simeq \left(\begin{array}{cc} C^2 & CS \\ CS & S^2 \end{array} \right),
$$
where $C=\cos(X)$ and $S=\sin(X)$. In terms of this description (and modulo this spatial isomorphism) the unique geodesic between $P$ and $Q$ is easy to describe. Indeed, note that
$$
\left(\begin{array}{cc} C^2 & CS \\ CS & S^2 \end{array} \right)=\left(\begin{array}{cc} C & -S \\ S & C \end{array} \right)\left(\begin{array}{cc} 1 & 0 \\ 0 & 0 \end{array} \right)
\left(\begin{array}{cc} C & S \\ -S & C \end{array} \right),
$$
and
$$
\left(\begin{array}{cc} C & -S \\ S & C \end{array} \right)=e^{\left(\begin{array}{cc} 0  & -X \\ X & 0 \end{array} \right)}=e^{i\left(\begin{array}{cc} 0 & iX \\ -iX & 0 \end{array} \right)}.
$$
Therefore
$$
\left(\begin{array}{cc} C & S \\ -S & C \end{array} \right)=\left(\begin{array}{cc} C & -S \\ S & C \end{array} \right)^*=e^{-i \left(\begin{array}{cc} 0 & iX \\ -iX & 0 \end{array} \right)}.
$$
That is, $\left(\begin{array}{cc} 0 & iX \\ -iX & 0 \end{array} \right)$ is the exponent of the unique geodesic 
$$
\delta(t)=e^{it\left(\begin{array}{cc} 0 & iX \\ -iX & 0 \end{array} \right)} \left(\begin{array}{cc} 1& 0 \\ 0 & 0 \end{array}\right) e^{-it\left(\begin{array}{cc} 0 & iX \\ -iX & 0 \end{array} \right)}
$$
joining $\delta(0)\simeq P$ and $\delta(1)\simeq Q$.
Therefore, if one writes the curve $2\delta-1$ of symmetries , associated to the geodesic $\delta$, it takes the form
$$
2\delta(t)-1=2e^{it\left(\begin{array}{cc} 0 & iX \\ -iX & 0 \end{array} \right)}\left(\begin{array}{cc} 1& 0 \\ 0 & 0 \end{array}\right) e^{-it\left(\begin{array}{cc} 0 & iX \\ -iX & 0 \end{array} \right)}
-1
$$
$$
=e^{it\left(\begin{array}{cc} 0 & iX \\ -iX & 0 \end{array} \right)}\left( 2\left(\begin{array}{cc} 1& 0 \\ 0 & 0 \end{array}\right)-1\right)e^{-it\left(\begin{array}{cc} 0 & iX \\ -iX & 0 \end{array} \right)}=e^{2it\left(\begin{array}{cc} 0 & iX \\ -iX & 0 \end{array} \right)}\left(\begin{array}{cc} 1& 0 \\ 0 & -1 \end{array}\right).
$$

Then
$$
2\delta(\frac12)-1=e^{2i\frac12\left(\begin{array}{cc} 0 & iX \\ -iX & 0 \end{array} \right)}\left(\begin{array}{cc} 1& 0 \\ 0 & -1 \end{array}\right)=\left(\begin{array}{cc} C & -S \\ S & C \end{array}\right)\left(\begin{array}{cc} 1& 0 \\ 0 & -1 \end{array}\right)=\left(\begin{array}{cc} C & S \\ S & -C \end{array}\right).
$$
Therefore:
\begin{teo}
Let $P$, $Q$ be in generic position. Then, modulo the Halmos isomorphism, the midpoint of the geodesic bewteen $P$ and $Q$ is 
$$
\delta(\frac12)\simeq\frac12\left(\begin{array}{cc} C+1& S \\ S & 1-C \end{array}\right).
$$
\end{teo}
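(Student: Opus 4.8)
The plan is to read off the answer directly from the symmetry computation carried out immediately before the statement. All of the real work is already in place: the Halmos normal form for $P$ and $Q$ in generic position, the identification of the geodesic exponent as $\left(\begin{array}{cc} 0 & iX \\ -iX & 0 \end{array}\right)$, and the evaluation of the symmetry curve $2\delta(t)-1=e^{2it\left(\begin{array}{cc} 0 & iX \\ -iX & 0 \end{array}\right)}\left(\begin{array}{cc} 1 & 0 \\ 0 & -1 \end{array}\right)$. Setting $t=\frac12$ in this last expression, and using $C=\cos(X)$, $S=\sin(X)$ together with the group law $e^{i\left(\begin{array}{cc} 0 & iX \\ -iX & 0 \end{array}\right)}=\left(\begin{array}{cc} C & -S \\ S & C \end{array}\right)$, gives
$$
2\delta(\frac12)-1=\left(\begin{array}{cc} C & S \\ S & -C \end{array}\right).
$$

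From here the only remaining step is to pass from the symmetry $2\delta(\frac12)-1$ back to the projection $\delta(\frac12)$, which is the elementary inversion $\delta(\frac12)=\frac12\left((2\delta(\frac12)-1)+1\right)$. Adding the identity and halving yields exactly
$$
\delta(\frac12)\simeq\frac12\left(\begin{array}{cc} C+1 & S \\ S & 1-C \end{array}\right),
$$
which is the assertion.

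Finally, as a consistency check — and the only place where anything could conceivably go wrong — I would verify that the matrix on the right is genuinely an orthogonal projection, i.e. that it is selfadjoint and idempotent. Selfadjointness is immediate, since $C$ and $S$ are selfadjoint. Idempotency reduces, after multiplying out the $2\times 2$ block product, to the two identities $C^2+S^2=1$ and $CS=SC$, both of which hold because $C=\cos(X)$ and $S=\sin(X)$ are continuous functions of the single selfadjoint operator $X$. I expect no genuine obstacle here: the substance of the theorem is the preceding normal-form computation of the geodesic, and the statement itself is merely its restatement at the level of projections rather than symmetries.
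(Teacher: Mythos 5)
Your proposal is correct and is essentially identical to the paper's own argument: the paper proves this theorem precisely by evaluating the symmetry curve $2\delta(t)-1=e^{2it\left(\begin{smallmatrix} 0 & iX \\ -iX & 0\end{smallmatrix}\right)}\left(\begin{smallmatrix} 1 & 0 \\ 0 & -1\end{smallmatrix}\right)$ at $t=\frac12$ and converting the resulting symmetry $\left(\begin{smallmatrix} C & S \\ S & -C\end{smallmatrix}\right)$ back to a projection. Your closing idempotency check (via $C^2+S^2=1$ and $CS=SC$, both automatic since $C=\cos(X)$, $S=\sin(X)$ are functions of the single selfadjoint operator $X$) is a harmless addition not present in the paper.
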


\bigskip

In our case, $P=P_{R(E)}$ and $Q=P_{R(E^*)}$ may not be in generic position. The relevant decomposition to describe the position of $R(E)$ and $R(E^*)$, in terms of Halmos approach (recalling that $R(E)\cap R(E^*)^\perp=\{0\}=R(E)^\perp\cap R(E^*)$) is
\begin{equation}\label{3 espacios}
\h=R(E)\cap R(E^*)\oplus R(E)^\perp\cap R(E^*)^\perp\oplus \h_0,
\end{equation}
where $\h_0$ is the generic part. Then, the exponent $X_E$, of the geodesic between $P_{R(E)}$ and $P_{R(E^*)}$, in terms of the decomposition (\ref{3 espacios}) is, modulo the Halmos isomorphism
$$
X_E\simeq 0\oplus 0 \oplus   \left(\begin{array}{cc} 0 & iX \\ -iX & 0 \end{array} \right).
$$
This is because in the first two subspaces, $P_{R(E)}$ and $P_{R(E^*)}$ act, respectively, as the identity and zero.

In \cite{davis}, Chandler Davis established the relation between decompositions of a selfadjoint contraction $A$ as a difference of two orthogonal projections $P,Q$ (i.e., $A=P-Q$), and symmetries $V$ such that $VAV=-A$. Specifically, in the case when $P$ and $Q$ satisfy that $R(P)\cap N(Q)=N(P)\cap R(Q)=\{0\}$, there is a special symmetry $V_d$, given by
\begin{equation}\label{davis}  
V_d:=D^{-1/2}(P+Q-1),
\end{equation}
where $D:=1-(P-Q)^2$. Note that $N(1-(P-Q)^2)=R(P)\cap N(Q)\oplus N(P)\cap R(Q)=\{0\}$, but $1-(P_Q)^2$ need not be invertible, and thus $D^{-1/2}$ might be unbounded. Nevertheless, $V_d$ turns out to be a symmetry, satisfying $V_dAV_d=-A$. In \cite{p-q}, we proved the relation between $V_d$ and the geodesic joining $P$ and $Q$ (which in this case is unique), namely
\begin{equation}\label{relacion}
V_d=e^{iZ}(2P-1),
\end{equation}  
where $Z$ is the (selfadjoint) exponent of the (unique) geodesic $\delta$ joining $P$ and $Q$, or through Halmos isomorphism
$$
V_d\simeq \left(\begin{array}{cc} C & -S \\ S & C \end{array} \right)\left(\begin{array}{cc} 1 & 0 \\ 0 & -1 \end{array} \right).
$$
In other words:
\begin{coro}
The midpoint $\delta(\frac12)$ of the geodesic $\delta$ joining $P$ and $Q$ is
$$
\delta(\frac12)=\frac12(V_d+1),
$$
i.e., $V_d=2\delta(\frac12)-1$.
\end{coro}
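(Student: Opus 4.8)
The plan is to combine the symmetry description of the geodesic recalled in the Preliminaries with the relation (\ref{relacion}) already established in \cite{p-q}. Under the standing hypothesis $R(P)\cap N(Q)=N(P)\cap R(Q)=\{0\}$, the intersections $R(P)\cap R(Q)^\perp$ and $R(P)^\perp\cap R(Q)$ are trivial, so there is a unique geodesic $\delta$ with $\delta(0)=P$ and $\delta(1)=Q$, and its exponent $Z$ is a selfadjoint operator that is codiagonal with respect to $R(P)\oplus R(P)^\perp$; in particular $Z$ anti-commutes with $2P-1$.

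First I would pass to symmetries. As recalled in the Preliminaries (following \cite{pr}), the codiagonality of $Z$ gives $e^{itZ}(2P-1)=(2P-1)e^{-itZ}$, so that the curve of symmetries satisfies
$$
2\delta(t)-1=e^{2itZ}(2P-1).
$$
Setting $t=\frac12$ yields $2\delta(\frac12)-1=e^{iZ}(2P-1)$.

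Second, I would invoke (\ref{relacion}), which states exactly that $V_d=e^{iZ}(2P-1)$. Comparing the two identities gives $2\delta(\frac12)-1=V_d$, equivalently $\delta(\frac12)=\frac12(V_d+1)$, as claimed.

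The computation itself is a one-line substitution, so the real content lies entirely in the two inputs it rests on. The first, that the midpoint of the geodesic in symmetry form is $e^{iZ}(2P-1)$, is immediate from \cite{pr}. The delicate input is the second: since $D=1-(P-Q)^2$ need not be invertible, the factor $D^{-1/2}$ may be unbounded, so it is not a priori obvious that $V_d=D^{-1/2}(P+Q-1)$ is a bounded symmetry satisfying $V_d(P-Q)V_d=-(P-Q)$, nor that it coincides with $e^{iZ}(2P-1)$. These facts are precisely the output of Davis' analysis in \cite{davis} and of the identification (\ref{relacion}) in \cite{p-q}; granting them, the midpoint formula follows at once.
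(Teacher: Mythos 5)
Your proof is correct and takes essentially the same route as the paper: the discussion preceding the corollary derives $2\delta(\frac12)-1=e^{iZ}(2P-1)$ from the symmetry form of the geodesic in \cite{pr} and identifies this with $V_d$ via the relation (\ref{relacion}) from \cite{p-q}, exactly as you do. The only difference is cosmetic---the paper's formal proof consists solely of the observation that the hypothesis $R(P)\cap N(Q)=N(P)\cap R(Q)=\{0\}$ holds for $P=P_{R(E)}$, $Q=P_{R(E^*)}$, while you spell out the substitution explicitly and correctly flag that the boundedness of $V_d$ (despite $D^{-1/2}$ being possibly unbounded) is the nontrivial input from \cite{davis} and \cite{p-q}.
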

\begin{proof}
We emphasize that in our case, $P=P_{R(E)}$ and $Q=P_{R(E^*)}$, the condition $R(P)\cap N(Q)=N(P)\cap R(Q)=\{0\}$ holds.
\end{proof}

Let us write down the symmetry $V_d$ in our special case. I turns out that it has a nicer expression in terms of reflections: write $C=C_E=2E-1$ (so that $C^*=2E^*-1$). After routine computations, we obtain that
\begin{equation}\label{los Cs}
P_{R(E)}=(1+C)(C+C^*)^{-1} \ \hbox{ and }\  P_{R(E^*)}=(1+C^*)(C+C^*)^{-1}.
\end{equation}
Notice that in our case, $D=1-A^2$ is invertible (i.e., $D^{-1}$ is bounded), because $\|A\|=\|P_{R(E)}-P_{R(E^*)}\|<1$.
\begin{teo}
With the current notations, we have that for $P_{R(E)}$ and $P_{R(E^*)}$, the Davis symmetry $V_d$ is the unitary part in the polar decomposition of $C+C^*$.
\end{teo}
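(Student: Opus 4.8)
The plan is to reduce the whole statement to the functional calculus of the single selfadjoint invertible operator $C+C^*$. First I would recall the purely algebraic identity
$$
(P-Q)^2+(P+Q-1)^2=1,
$$
valid for any pair of projections $P,Q$ (a two-line check using $P^2=P$, $Q^2=Q$). Taking $P=P_{R(E)}$ and $Q=P_{R(E^*)}$, this identifies $D=1-(P-Q)^2=(P+Q-1)^2$. Consequently $D^{-1/2}=|P+Q-1|^{-1}$, and the Davis symmetry (\ref{davis}) collapses to
$$
V_d=D^{-1/2}(P+Q-1)=|P+Q-1|^{-1}(P+Q-1),
$$
which is exactly the \emph{sign}, i.e. the unitary part in the polar decomposition, of the selfadjoint invertible operator $P+Q-1$. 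The point of this step is that once $D=(P+Q-1)^2$, all factors in $V_d$ are functions of $P+Q-1$, so the a priori one-sided expression has no ordering ambiguity.

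Next I would compute $P+Q-1$ in terms of $C$. Adding the two expressions in (\ref{los Cs}),
$$
P_{R(E)}+P_{R(E^*)}-1=\left[(1+C)+(1+C^*)\right](C+C^*)^{-1}-1=2(C+C^*)^{-1},
$$
using that $(C+C^*)(C+C^*)^{-1}=1$. Thus $V_d$ is the sign of $2(C+C^*)^{-1}$. Finally I would invoke that the sign of a selfadjoint invertible operator is unchanged both under multiplication by a positive scalar and under inversion (immediate from the spectral theorem, since $\mathrm{sgn}(\lambda^{-1})=\mathrm{sgn}(\lambda)$), so that
$$
V_d=\mathrm{sgn}\!\left(2(C+C^*)^{-1}\right)=\mathrm{sgn}(C+C^*)=(C+C^*)\,|C+C^*|^{-1},
$$
which is precisely the unitary (in fact symmetry) part in the polar decomposition $C+C^*=\mathrm{sgn}(C+C^*)\,|C+C^*|$.

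The argument is essentially forced, and I do not expect a genuine obstacle: every operator in sight lies in the abelian algebra generated by $C+C^*$. The only points demanding care are the identification $D=(P+Q-1)^2$, which is what legitimizes writing $V_d$ as a bona fide sign rather than an awkward product $D^{-1/2}(P+Q-1)$, and the standing observation (already recorded in the excerpt) that $C+C^*=2(E+E^*-1)$ is invertible. The latter guarantees simultaneously that $D^{-1/2}$ is bounded and that the polar decomposition of $C+C^*$ is a clean sign/modulus factorization with a symmetry as its unitary part.
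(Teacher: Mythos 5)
Your proof is correct, and its key step is genuinely different from the paper's. The paper computes $D=(1-A)(1+A)$ by substituting the formulas (\ref{los Cs}) for $P_{R(E)}$ and $P_{R(E^*)}$ and working through the reflection algebra (using $C^2=(C^*)^2=1$) to get $D=4C^*(C+C^*)^{-1}C(C+C^*)^{-1}$ and then $D^{-1}=\frac14(C+C^*)^2$. You obtain the same information at one stroke from the universal two-projection identity $(P-Q)^2+(P+Q-1)^2=1$ (which indeed checks: $(P-Q)^2=P+Q-PQ-QP$ and $(P+Q-1)^2=1-P-Q+PQ+QP$), giving $D=(P+Q-1)^2$ with no hypothesis on $P,Q$ beyond being projections. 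This isolates a clean, reusable general fact: whenever $P+Q-1$ is invertible, the Davis symmetry is $\mathrm{sgn}(P+Q-1)$, i.e.\ the unitary part of the polar decomposition of $P+Q-1$ --- and since $|P+Q-1|^{-1}$ commutes with $P+Q-1$, there is, as you say, no ordering issue. The only place the idempotent structure enters your argument is the single identity $P+Q-1=2(C+C^*)^{-1}$, which is exactly the computation the paper also performs from (\ref{los Cs}); the functional-calculus observation that $\mathrm{sgn}$ is invariant under inversion and positive scaling then finishes. In fact the paper's intermediate result $D^{-1}=\frac14(C+C^*)^2$ combined with $(P+Q-1)^{-1}=\frac12(C+C^*)$ is precisely your identity $D=(P+Q-1)^2$ in disguise, so the two proofs meet at the same formula; yours derives it structurally (and more generally), while the paper's verifies it by explicit computation in the algebra generated by $C$, which has the side benefit of recording the explicit expressions for $D$ and $D^{-1/2}=\frac12|C+C^*|$ used nowhere else.
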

\begin{proof}
We compute first 
$$
D=1-A^2=(1-A)(1+A)
$$
$$
=\{1-(1+C)(C+C^*)^{-1}+(1+C^*)(C+C^*)^{-1}\}\{1+(1+C)(C+C^*)^{-1}-(1+C^*)(C+C^*)^{-1}\}
$$
$$
=4C^*(C+C^*)^{-1}C(C+C^*)^{-1}.
$$
Then 
$$
D^{-1}=\frac14 (C+C^*)C(C+C^*)C^*=\frac14(2+C^*C+CC^*)=\frac14(C+C^*)^2,
$$
and therefore
$$
D^{-1/2}=\frac12 \{(C+C^*)^2\}^{1/2}=|C+C^*|.
$$
On the other hand, 
$$
P_{R(E)}+P_{R(E^*)}-1=(1+C)(C+C^*)^{-1}+(1+C^*)(C+C^*)^{-1}-1=2(C+C^*)^{-1}.
$$
Thus,
$$
V_d=|C+C^*|(C+C^*)^{-1}, \ \hbox{ i.e., } \ C+C^*=V_d|C+C^*|
$$
is the (unique) polar decomposition of the invertible element $C+C^*$.
\end{proof}

\section{An example}\label{seccion 7}

Let us consider the following example:
\begin{ejem}\label{ejemplo 54}
Let $\h=L^2=L^2(\mathbb{T})$ (with normalized Lebesgue measure), and for $a\in\mathbb{D}$, consider the map $\varphi_a(z)=\displaystyle{\frac{a-z}{1-\bar{a}z}}$. This map satisfies $\varphi_a(\mathbb{D})=\mathbb{D}$ and $|\varphi_a(z)|=1$, so that it also is a  map from $\mathbb{T}$ onto $\mathbb{T}$, and is its own inverse: $\varphi_a(\varphi_a(z))=z$. Therefore it induces a reflection in $L^2$,
$$
\Gamma_af =f\circ\varphi_a,\  f\in L^2.
$$
Clearly $\Gamma_a^2=1$. The idempotent $E_a=\frac12(\Gamma_a+1)$ associated to $\Gamma_a$ has range $R(E_a)=N(\Gamma_a-1)$, with $R(E_a^*)=N(\Gamma_a+1)^\perp$. These subspaces were studied in \cite{reflexiones en L2}. There it was shown that $N(\Gamma_a-1)$ and  $N(\Gamma_a+1)$ are in generic position (Theorem 6.3). Also it was shown (Theorem 3.2) that $N(\Gamma_a-1)=\Gamma_{\omega_a}(\e)$, where $\omega_a=\frac{1}{\bar{a}}(1-\sqrt{1-|a|^2})$ is the (unique) fixed point of $\varphi_a$ inside $\mathbb{D}$, and $\e$ is the subspace of elements in $\h$ which have only even powers in their Fourier series:
$$
\e=\{f\in L^2: f=\sum_{k=-\infty}^\infty \hat{f}(2k) z^{2k}\},
$$
so that
$$
R(E_a)=\{g\in L^2: g=\sum_{k=-\infty}^\infty a_k \left(\frac{\omega_a-z}{1-\bar{\omega}_az}\right)^{2k}\}.
$$
Accordingly, $N(\Gamma_a+1)=\Gamma_{\omega_a}\o$, for $\o$  the subspace of  elements which have only non nil Fourier coefficients for odd indices, and then
$$
R(E_a^*)=\{h\in L^2: h=\sum_{k=-\infty}^\infty a_k \left(\frac{\omega_a-z}{1-\bar{\omega}_az}\right)^{2k+1}\}^\perp.
$$
Let us compute the polar decomposition of $\Gamma_a+\Gamma_a^*$. It is easy to see that $\Gamma_a^*=M_{|\psi_a|^2}\Gamma_a=\Gamma_a M_{1/|\psi_a|^2}$, where $\psi_a(z)=\frac{(1-|a|^2)^{1/2}}{1-\bar{a}z}$ is the normalized  Szego kernel.
Then 
$$
|\Gamma_a+\Gamma_a^*|^2=(\Gamma_a+\Gamma_a^*)^2=(\Gamma_a+M_{|\psi_a|^2}\Gamma_a)(\Gamma_a+\Gamma_a M_{1/|\psi_a|^2})=2+M_{|\psi_a|^2}+M_{1/|\psi_a|^2},
$$
and then $|\Gamma_a+\Gamma_a^*|=M_{\gamma_a}$, where 
$$
\gamma_a=(2+|\psi_a|^2+1/|\psi_a|^2)^{1/2}=\frac{1+|\psi_a|^2}{|\psi_a|}.
$$
Then 
$$
V_d=|\Gamma_a+\Gamma_a^*|(\Gamma_a+\Gamma_a^*)^{-1}=M_{\gamma_a}(\Gamma_a+\Gamma_aM_{1/|\psi_a|^2})^{-1}=M_{\gamma_a}\{\Gamma_a(1+M_{1/|\psi_a|^2}\Gamma_a)\}^{-1}
$$
$$
=M_{\gamma_a(1+1/|\psi_a|^2)^{-1}}\Gamma_a=M_{|\psi_a|}\Gamma_a.
$$
Then. the midpoint $\delta(\frac12)$ of the geodesic between $P_{R(E_a)}$ and $P_{R(E_a^*)}$ is 
$$
\delta(\frac12)=\frac12(M_{|\psi_a|}\Gamma_a+1).
$$
Using the formula in \cite{ando} for $P_{R(E)}$ ((\ref{ando1})  above), for the idempotent $\frac12(\Gamma_a+1)$ that projects onto $N(\Gamma_a-1)$ and  for $1-\frac12(\Gamma_a+1)$ that projects onto $N(\Gamma_a+1)$, it is easy to see that
\begin{equation}
P_{N(\Gamma_a-1)}=(1+\Gamma_a)M_{(1+|\psi_a|^2)^{-1}} \ \hbox{ and } \ P_{N(\Gamma_a+1)}=(1-\Gamma_a)M_{(1+|\psi_a|^2)^{-1}}
\end{equation}
Then
$$
P_{N(\Gamma_a-1)}-P_{N(\Gamma_a+1)}^\perp=P_{N(\Gamma_a-1)}+P_{N(\Gamma_a+1)}-1=M_{\kappa_a},
$$
where $\kappa_a:=2(1+|\psi_a|^2)^{-1}-1=\frac{1-|\psi_a|^2}{1+|\psi_a|^2}$. An elementary calculation shows that the image $\{\kappa_a(z): z\in\mathbb{T}\}$ of this map is $[-|a|,|a|]$. Therefore, in particular,
\begin{equation}\label{norma ejemplo}
\|P_{N(\Gamma_a-1)}-P_{N(\Gamma_a+1)}^\perp\|=|a|.
\end{equation}
The operator $B_a:N(\Gamma_a-1)^\perp\to N(\Gamma_a-1)$ in the matrix representation $E_a=\frac12(\Gamma_a+1)=\left(\begin{array}{cc} 1 & B_a \\ 0 & 0 \end{array}\right) $ in terms of $\h=N(\Gamma_a-1)\oplus N(\Gamma_a-1)^\perp$ has trivial nullspace and dense range (this is a consequence of the fact the eigenspaces of $\Gamma_a$ are in generic position, see Section 2). Also $B_a$  can be explicitely computed: 
$$
B_a=\frac12 P_{N(\Gamma_a-1)}(1+\Gamma_a)(1-P_{N(\Gamma_a)})=\frac12M_{\kappa_a}(\Gamma_a-1).
$$
From Theorem \ref{teo 24} and (\ref{norma ejemplo}) above, it follows that
$$
\|B_a\|=\frac{|a|}{\sqrt{1-|a|^2}}.
$$
Consider the polar decomposition $B_a=U_a|B_a|$, with  $U_a:N(\Gamma_a-1)^\perp\to N(\Gamma_a-1)$    isometric. Then, with the similar  computations as in Lemma \ref{lema B} and Corollary \ref{coro 32}, we have that the exponent of the geodesic $\gamma$ joining $\gamma(0)=P_{R(E_a)}=P_{N(\Gamma_a-1)}$ and $\gamma(1)=P_{R(E_a^*)}=P_{N(\Gamma_1+1)}^\perp$ is (in  terms of the decomposition $\h=N(\Gamma_a-1)\oplus N(\Gamma_a-1)^\perp$)
$$
X_a=\left( \begin{array}{cc} 0 & i U_a \arctan(|B_a|) \\ -i \arctan(|B_a|)U_a^* & 0 \end{array}\right).
$$
Let us compute $|B_a|$ and $U_a$ explicitely.
Recall that 
$$
(E_a+E_a^*-1)^2-1=\left(\begin{array}{cc} B_aB_a^* & 0 \\ 0  & B_a^*B_a\end{array} \right)
$$
On the other hand
$$
(E_a+E_a^*-1)^2-1=E_aE_a^*+E_a^*E_a-E_a-E_a^*
$$
$$
=\frac14(1+\Gamma_a)(1+\Gamma_a^*)+\frac14(1+\Gamma_a^*)(1+\Gamma_a)-\frac12(1+\Gamma_a) -\frac12(1+\Gamma_a^*)=\frac14\Gamma_a\Gamma_a^*+\frac14\Gamma_a^*\Gamma_a-\frac12
$$
$$
=\frac14 M_{|\psi_a|^2}+\frac14M_{\frac{1}{|\psi_a|^2}}-\frac12=M_{\frac{(|\psi_a|^2-1)^2}{4|\psi_a|^2}}.
$$
Then
$$
\left(\begin{array}{cc} |B_a^*| & 0 \\ 0 & |B_a| \end{array} \right)=\frac12 M_{\frac{||\psi_a|^2-1|}{|\psi_a|}}.
$$
It follows that $|B_a|$ is the compression of $\frac12 M_{\frac{||\psi_a|^2-1|}{|\psi_a|}}$ with $P_{N(\Gamma_a-1)}^\perp$. Note that 
$$
P_{N(\Gamma_a-1)}^\perp=1-P_{N(\Gamma_a-1)}=1-(1+\Gamma_a)M_{(1+|\psi_a|^2)^{-1}}=M_{1-(1+|\psi_a|^2)^{-1}}-\Gamma_aM_{(1+|\psi_a|^2)^{-1}}.
$$
Clearly  $\Gamma_aM_{(|\psi_a|^2+1)^{-1}}=M_{\frac{|\psi_a|^2}{1+|\psi_a|^2}}\Gamma_a$. Then
$$
P_{N(\Gamma_a-1)}^\perp=M_{\frac{|\psi_a|^2}{1+|\psi_a|^2}}(1-\Gamma_a).
$$
Thus
$$
|B_a|=\frac12M_{\frac{|\psi_a|^2}{1+|\psi_a|^2}}(1-\Gamma_a)M_{\frac{||\psi_a|^2-1|}{|\psi_a|^2}}M_{\frac{|\psi_a|^2}{1+|\psi_a|^2}}(1-\Gamma_a),
$$
which after straightforwad computations (involving $\Gamma_a M_{f(|\psi_a|^2)}=M_{f(1/|\psi_a|^2)}\Gamma_a$, for $f$ continuous in $(0,+\infty)$), give
\begin{equation}
|B_a|=M_{|\psi_a|^2\frac{||\psi_a|^2-1|}{(1+|\psi_a|^2)^2}}(1-\Gamma_a).
\end{equation}
To complete the computation of the exponent $X_a$, we must describe the  isometry
$U_a:N(\Gamma_a-1)^\perp\to N(\Gamma_a-1)$. That is
$$
\frac12 M_{\kappa_a}(\Gamma_a-1)=U_a M_{|\psi_a|^2\frac{||\psi_a|^2-1|}{(1+|\psi_a|^2)^2}}(1-\Gamma_a) \ \hbox{ restricted to } N(\Gamma_a-1)^\perp.
$$
Since $\Gamma_a-1$ is an isomorphism when restricted to $N(\Gamma_a-1)^\perp$, this amounts to
$$
\frac12 M_{\kappa_a}=U_a M_{|\psi_a|^2\frac{||\psi_a|^2-1|}{(1+|\psi_a|^2)^2}} \ \hbox{ on the range of } \Gamma_a-1\big|_{N(\Gamma_a-1)^\perp},  \hbox{ i.e., on } N(\Gamma_a+1).
$$
This implies that the (non invertible) operator $M_{|\psi_a|^2\frac{||\psi_a|^2-1|}{(1+|\psi_a|^2)^2}}$ maps $N(\Gamma_a+1)$ onto a dense subspace of $\s\subset N(\Gamma_a-1)=\overline{R(B_a^*)}$. It follows that on this subspace $\s$, we have
$$
U_a\Big|_\s=M_{\kappa_a}M_{\frac{(1+|\psi_a|^2)^2}{|1-|\psi_a|^2|}}\Big|_\s=M_{sgn(1-|\psi_a|^2)} M_{\frac12(1+|\psi_a|^2)}\Big|_\s,
$$
where $sgn(t)$ is the sign function: $sign(t)=\left\{ \begin{array}{l} t/|t| \hbox{ if } t\ne 0 \\ 0 \hbox{ if } t=0 \end{array} \right.$.
Since the right hand operator is bounded, it follows that
$$
U_a=M_{sgn(1-|\psi_a|^2)} M_{\frac12(1+|\psi_a|^2)}\Big|_{N(\Gamma_a-1)}.
$$
Note that $M_{sgn(1-|\psi_a|^2)}$ is a global symmetry: the set $\{t\in[-\pi,\pi]: |\psi_a|=1\}$ has measure zero: it consists of two points (for instance, for $a=r\in(0,1)$, it is $\{\arccos(r), -\arccos(r)\}$). This implies that $M_{\frac12(1+|\psi_a|^2)}\Big|_{N(\Gamma_a-1)}$ is isometric.
\end{ejem}

\section{A retraction from idempotents onto  projections}

In \cite{pr} it was noted that the unitary part $R$ in the polar decomposition $C=R|C|$ of a reflection $C$, is a symmetry. In \cite{cpr}, G. Corach, H. Porta and L. Recht studied the geometry of the map from the set $\q(\h)=\{C\in\b(\h): C^2=1\}$ of reflections onto the set $\p(\h)=\{S\in\q(\h): S^*=S\}$ of symmetries:
\begin{equation}\label{mapa cpr}
\pi:\q(\h)\to\p(\h), \ \pi(C)=R=C|C|^{-1}.
\end{equation}
Notice that in  Example \ref{ejemplo 54},  the symmetry  $V_d=M_{|\psi_a|}\Gamma_a$ also coincides with the unitary part in the polar decomposition of $\Gamma_a$:
$$
\Gamma_a^*\Gamma_a=M_{\frac{(1-|a|^2}{|1-\bar{a}z|^2}}\Gamma_a\Gamma_a=M_{|\psi_a|^2},
$$
and thus $|\Gamma_a|=\left(M_{|\psi_a|^2}\right)^{1/2}=M_{|\psi_a|}$. Therefore the unitary part $R_a$ of the polar decomposition $\Gamma_a=R_a|\Gamma_a|$ is 
$R_a=|\Gamma_a|\Gamma_a=M_{|\psi_a|}\Gamma_a=V_d$.
It is fair to ask if this is always the case.
\begin{prop}
Let $C$ be a reflection, then the unitary part in the polar decomposition of $C+C^*$ coincides with the unitary part of the polar decomposition of $C$. 
\end{prop}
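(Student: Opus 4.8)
The plan is to reduce everything to the polar decomposition $C=R|C|$ of the reflection $C$ and to exploit the fact, recalled just above from \cite{pr}, that the unitary factor $R$ is in fact a symmetry ($R^*=R$, $R^2=1$). Since $C^2=1$, the operator $C$ is invertible, hence so are $C^*C$ and $|C|=(C^*C)^{1/2}$; thus $R=C|C|^{-1}$ is a genuine unitary (not merely a partial isometry) and $|C|$ is a positive \emph{invertible} operator. The goal is to show that this same $R$ is also the unitary part of $C+C^*$.

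First I would rewrite $C+C^*$ using the symmetry of $R$. From $C=R|C|$ together with $R^*=R$ and $|C|^*=|C|$, one gets $C^*=|C|R$, so that
$$
C+C^*=R|C|+|C|R.
$$
The key step is then to left-multiply by $R$ and use $R^2=1$:
$$
R(C+C^*)=|C|+R|C|R.
$$
This operator is selfadjoint, being a sum of two selfadjoint operators, and it is positive, since $|C|\ge 0$ and $R|C|R=R|C|R^*\ge 0$ is the conjugate of a positive operator by the unitary $R$. It is moreover invertible: $R|C|R\ge 0$ yields $|C|+R|C|R\ge |C|\ge c>0$ for some $c$, because $|C|$ is positive and invertible. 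Write $G:=|C|+R|C|R$.

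Finally I would invoke uniqueness of the polar decomposition. Writing $C+C^*=R\,(R(C+C^*))=R\,G$, we have expressed $C+C^*$ as the product of the unitary $R$ and the positive invertible operator $G$; in particular $C+C^*$ is itself invertible, so its polar decomposition is unique, and therefore $R$ must be its unitary part (and necessarily $G=|C+C^*|$). Since $R$ is by construction the unitary part of $C$, this is exactly the assertion.

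The main point to be careful about is the positivity and invertibility of $R(C+C^*)$, i.e. that left-multiplication by the symmetry $R$ genuinely converts the selfadjoint operator $C+C^*$ into a positive one; this is precisely where the fact that $R$ is a symmetry (and not merely unitary) is indispensable, since for a merely unitary $R$ the analogous expression $R^*(C+C^*)=|C|+R^*|C|R^*$ need not even be selfadjoint. Everything else is forced by the uniqueness of the polar decomposition of invertible operators.
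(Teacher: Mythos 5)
Your proof is correct, and it takes a genuinely different route from the paper's, though the two turn out to be the same computation in disguise. The paper reduces the claim to the identity $|C|C(C+C^*)=|C+C^*|$, observes that the left-hand side equals $|C|(1+CC^*)=(CC^*)^{-1/2}(1+CC^*)$ and is therefore positive, and then verifies the identity by squaring both sides, each square being $2+CC^*+C^*C$ (via $(CC^*)^{-1}=C^*C$, a consequence of $C^2=1$); notably, the paper's proof never uses that the unitary part $R$ is a symmetry. You instead make that symmetry (the fact from \cite{pr} recalled at the start of the section) the engine of the argument: from $C^*=|C|R$ and $R^2=1$ you obtain $C+C^*=R\,G$ with $G=|C|+R|C|R$ positive and invertible (you correctly justify invertibility from $G\ge |C|>0$), and you conclude by uniqueness of the polar decomposition of an invertible operator, bypassing the squaring verification entirely. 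The hidden identification is that $R=R^{-1}=|C|C$, so your $R(C+C^*)$ is exactly the paper's $|C|C(C+C^*)$; moreover $R|C|R=|C|^{-1}$, so your $G=|C|+|C|^{-1}=(CC^*)^{-1/2}(1+CC^*)$, matching the paper's modulus. What each approach buys: yours gives a transparent structural reason why $R$ is forced (an invertible operator written as unitary times positive has a unique such factorization) plus the pleasant byproduct $|C+C^*|=|C|+|C|^{-1}$, at the cost of importing the symmetry fact from \cite{pr}; the paper's is self-contained modulo elementary algebra of reflections, at the cost of a less illuminating equality-of-squares check. Your closing remark about where selfadjointness of $R$ is indispensable is accurate and well placed.
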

\begin{proof}
Let $C=R_C|C|$ and $C+C^*=R_{C+C^*}|C+C^*|$ be the polar decompositions of $C$ and $C+C^*$. We must show that
$$
C|C|^{-1}=(C+C^*)|C+C^*|^{-1}.
$$
This is equivalent to the equality of the inverses $|C|C=|C+C^*|(C+C^*)^{-1}$,  i.e., 
\begin{equation}\label{paso}
|C|C(C+C^*)=|C+C^*|.
\end{equation}
Both terms in (\ref{paso}) are positive: 
$$
|C|C(C+C^*)=|C|(1+CC^*)=(CC^*)^{-1/2}(1+CC^*)
$$
is positive. Then it suffices to prove the equality of the squares in (\ref{paso}): 
$$
\left(|C|(1+CC^*)\right)^2=|C+C^*|^2.
$$
The left hand term equals (again, since $|C|$ and $1+CC^*$ commute)
$$
C^*C(1+CC^*)^2=C^*C(1+2CC^*+(CC^*)^2)=C^*C+2+CC^*,
$$
which coincides with 
$$
|C+C^*|^2=(C+C^*)^2=2+CC^*+C^*C.
$$
\end{proof}
We have then yet another characterization of the midpoint between $R(E)$ and $R(E^*)$:  
\begin{coro}
The midpoint $\delta_E(\frac12)$ between $P_{R(E)}$ and $P_{R(E^*)}$ is the unitary part $R_E$ in the polar decomposition $2P_{R(E)}-1=R_E|2P_{R(E)}-1|$.

In other words, 
$$
\frac12(\pi(C)+1)=m(\frac12(C+1)).
$$
\end{coro}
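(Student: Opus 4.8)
The plan is to chain together the three results that immediately precede this Corollary, using throughout the dictionary between a projection $P\in\p(\h)$ and its associated symmetry $2P-1$. Write $C=C_E=2E-1$, which is a reflection since $E^2=E$; as recalled at the start of this section (following \cite{pr}), the unitary part in its polar decomposition $C=R_E|C|$ is a \emph{symmetry} $R_E=\pi(C)=C|C|^{-1}$. The target identity, written ``$\frac{1}{2}(\pi(C)+1)=m(\frac{1}{2}(C+1))$'', is precisely the assertion that the orthogonal projection $\frac{1}{2}(R_E+1)=\pi(E)$ equals the matched projection $m(E)$.

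First I would invoke the Corollary of Section 6, which states that the midpoint of the geodesic, written as a symmetry, is the Davis symmetry: $2\delta_E(\frac{1}{2})-1=V_d$. Next, the Theorem of Section 6 identifies $V_d$ as the unitary part in the polar decomposition of $C+C^*=2(E+E^*-1)$. At this point the only substantive input is the Proposition just proved, which asserts that the unitary part of $C+C^*$ coincides with the unitary part of $C$ itself; applying it yields $V_d=R_E=\pi(C)$.

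Combining these equalities gives $2\delta_E(\frac{1}{2})-1=V_d=\pi(C)$, so as projections $\delta_E(\frac{1}{2})=\frac{1}{2}(\pi(C)+1)=\pi(E)$. Finally, the Theorem of Section 5 already established $m(E)=\delta_E(\frac{1}{2})$; substituting produces $\pi(E)=\delta_E(\frac{1}{2})=m(E)$, which is exactly the asserted equality $\frac{1}{2}(\pi(C)+1)=m(\frac{1}{2}(C+1))$.

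There is no genuine analytic obstacle remaining, since every hard step — the spectral identification of $V_d$ and the polar-decomposition comparison — has already been carried out in the preceding results. The only point demanding care is the bookkeeping: one must pass consistently between an element of $\p(\h)$ viewed as a projection and as the symmetry $2P-1$, and reconcile the two uses of the symbol $\pi$ (on idempotents via $2E-1=(2\pi(E)-1)|2E-1|$, and on reflections via $\pi(C)=C|C|^{-1}$), which agree precisely because the choice $C=2E-1$ forces $\pi(C)=2\pi(E)-1$.
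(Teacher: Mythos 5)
Your proof is correct and follows exactly the route the paper intends: the paper states this corollary without a separate proof, as the immediate concatenation of the Section 6 corollary ($V_d=2\delta_E(\frac12)-1$), the Section 6 theorem ($V_d$ is the unitary part of $C+C^*$), the proposition immediately preceding (the unitary parts of $C+C^*$ and of $C$ coincide), and the Section 5 theorem ($m(E)=\delta_E(\frac12)$), which is precisely your chain. Your closing bookkeeping point is also well taken — in particular you correctly read the polar decomposition in the statement as that of $C=2E-1$ (the printed ``$2P_{R(E)}-1=R_E|2P_{R(E)}-1|$'' is evidently a slip, since $2P_{R(E)}-1$ is already a symmetry), and your reconciliation of the two uses of $\pi$ via $\pi(C)=2\pi(E)-1$ is exactly what makes the displayed identity equivalent to $\pi(E)=m(E)$.
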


\medskip

In \cite{cpr}, a Finsler metric was introduced in $\q(\h)$: for $C\in\q(\h)$ and $X\in(T\q(\h))_C$, 
\begin{equation}\label{finsler}
|X|_C:=\||C|^{1/2}X|C|^{-1/2}\|.
\end{equation}
Note that in $\p(\h)\subset\q(\h)$, this is the usual norm of $\b(\h)$. Among the facts proved in \cite{cpr}, it was shown that $(T\pi)_Q$ is contractive at every point $Q\in\q(\h)$. Also, that the metric behaves as a non positively curved metric when restricted to the fibers $\pi^{-1}(S)$, for $S$ in $\p$. In particular, any two points in the fiber $\pi^{-1}(S)$ can be joined by a unique geodesic, which is minimal along its path. The geodesic $C(t)$ joining $C(1)=C$ with $C(0)=S=\pi(C)$ inside $\pi^{-1}(S)$ is 
\begin{equation}\label{geodesica fibra}
C(t)=S|C|^t=|C|^{-t/2}S|C|^{t/2}=|C|^{-t}S, \ t\in\mathbb{R}.
\end{equation}

\begin{rem}
Note that the following maps in $\q(\h)$ are isometric for the Finsler structure of \cite{cpr}:
\begin{enumerate}\label{isometrias de Q}
\item
The map $E\mapsto 1-E$ between idempotents, at the reflection level is $C=2E-1\mapsto 2(1-E)-1=-C$, and is clearly isometric.
\item
The adjoint map $C\mapsto C^*$ is also isometric. Indeed, for $X\in(T\q(\h))_C$ (using that $|C^*|=|C|^{-1}$)
$$
|X^*|_{C^*}=\| |C^*|^{1/2}X^*|C^*|^{-1/2}\|=\| |C|^{-1/2}X^*|C|^{1/2}\|=\|(|C|^{1/2}X|C|^{-1/2})^*\|
$$
$$
=\||C|^{1/2}X|C|^{-1/2}\|=|X|_C.
$$
\end{enumerate}
\end{rem}
We may combine these facts, with the norm inequalities proved by  X. Tian, Q. Xu and C. Fu in \cite{TianXuFu}, to obtain:
\begin{coro}
Let $E$ be an idempotent (with infinite rank and nullity). Then, for any orthogonal projection $P$ and any $t\in\mathbb{R}$ we have that
$$
\|m(E)-P\|\le \||2E-1|^{-t/2}m(E)|2E-1|^{t/2}-P\|.
$$
\end{coro}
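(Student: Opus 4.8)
The plan is to recognize the operator $|2E-1|^{-t/2}m(E)|2E-1|^{t/2}$ as the idempotent attached to a point of the fiber geodesic (\ref{geodesica fibra}), and then to turn the statement into a one-variable minimality problem. Put $C=2E-1$ and $S=\pi(C)=2m(E)-1$, so that $m(E)=\tfrac12(S+1)$ by the preceding corollary. By (\ref{geodesica fibra}) the curve $C(t)=|C|^{-t/2}S|C|^{t/2}=S|C|^{t}$ is the geodesic in the fiber $\pi^{-1}(S)$ with $C(0)=S$, and its idempotents are exactly
$$
E_t:=\tfrac12(C(t)+1)=|C|^{-t/2}m(E)|C|^{t/2},\qquad E_0=m(E).
$$
Since $2E_t-1=C(t)$ and $V:=2P-1$ is a symmetry, passing to symmetries (which doubles every norm) shows the claim is equivalent to $\|S-V\|\le\|C(t)-V\|$ for all $t$. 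Hence it suffices to prove that $f(t):=\|C(t)-V\|$ is minimized at $t=0$.

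The first point is that $f$ is \emph{even}. Taking adjoints in the symmetric form $C(t)=|C|^{-t/2}S|C|^{t/2}$ gives $C(t)^*=|C|^{t/2}S|C|^{-t/2}=C(-t)$ (the adjoint acts as the geodesic symmetry about $S$, cf.\ the isometry $C\mapsto C^*$ of Remark \ref{isometrias de Q}); as the norm is adjoint invariant and $V=V^*$, we get $f(t)=\|C(t)^*-V\|=\|C(-t)-V\|=f(-t)$. Using now the other form $C(\pm t)=S|C|^{\pm t}$, convexity of the norm together with evenness yields the midpoint bound
$$
f(t)=\tfrac12\big(\|C(t)-V\|+\|C(-t)-V\|\big)\ge\Big\|\tfrac{C(t)+C(-t)}{2}-V\Big\|=\|SG-V\|,
$$
where $G:=\tfrac12(|C|^{t}+|C|^{-t})=\cosh(t\log|C|)\ge 1$. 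Factoring the unitary $S$ out on the left gives $\|SG-V\|=\|G-W\|$ and likewise $\|S-V\|=\|1-W\|$, where $W:=SV$ is unitary (a product of two symmetries).

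Everything therefore reduces to the elementary lemma: \emph{if $G\ge 1$ is positive and $W$ is unitary, then $\|G-W\|\ge\|1-W\|$.} I would prove this by testing on approximate eigenvectors. Since $1-W$ is normal, $\|1-W\|=\sup\{|1-\mu|:\mu\in\sigma(W)\}$; fix $\mu=e^{i\phi}\in\sigma(W)$ and unit vectors $\eta_n$ with $\|(W-\mu)\eta_n\|\to0$. Writing $g_n=\langle G\eta_n,\eta_n\rangle\ge 1$ and using $\|G\eta_n\|\ge g_n$,
$$
\|(G-\mu)\eta_n\|^2=\|G\eta_n\|^2-2g_n\cos\phi+1\ge g_n^2-2g_n\cos\phi+1\ge 2-2\cos\phi=|1-\mu|^2,
$$
because $g\mapsto g^2-2g\cos\phi+1$ is increasing on $[1,\infty)$. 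Hence $\|(G-W)\eta_n\|\ge|1-\mu|-\|(W-\mu)\eta_n\|$, and letting $n\to\infty$ and taking the supremum over $\mu\in\sigma(W)$ gives $\|G-W\|\ge\|1-W\|$. Combining, $f(t)\ge\|G-W\|\ge\|1-W\|=\|S-V\|=f(0)$, which is exactly the asserted inequality.

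The main obstacle is precisely isolating and proving this last lemma. The tempting shortcut would be to say that the retraction $\pi$ (which sends every $E_t$ to $m(E)$) is non-expansive for the operator norm, or that $t\mapsto f(t)$ is convex; but the operator norm is not smooth, $E_t$ is genuinely oblique, and $f$ need not be convex. What rescues the argument is the even symmetry $C(t)^*=C(-t)$, which converts the two-sided estimate into the one-sided positive-operator lemma; the only truly analytic point is checking that the approximate-eigenvector estimate survives in the infinite-dimensional setting, where $W$ may have no honest eigenvectors.
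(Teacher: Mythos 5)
Your proof is correct, but it takes a genuinely different route from the paper. The paper's proof is a two-line reduction: since $\pi(C(t))=S$ for every point $C(t)=|C|^{-t/2}S|C|^{t/2}$ of the fiber geodesic (\ref{geodesica fibra}), all the idempotents $\frac12(C(t)+1)=|C|^{-t/2}m(E)|C|^{t/2}$ share the same matched projection $m(E)$, and the inequality is then obtained by invoking for each of them the Tian--Xu--Fu optimality property of Remark \ref{props matched}, in the form $\|m(F)-P\|\le\|F-P\|$. You never use that external input: after the same identification of $|2E-1|^{-t/2}m(E)|2E-1|^{t/2}$ with the fiber geodesic, you exploit the symmetry $C(t)^*=C(-t)$ (so $f(t)=\|C(t)-V\|$ is even), average $C(t)$ and $C(-t)$ to replace $C(t)$ by $SG$ with $G=\cosh(t\log|C|)\ge 1$, and finish with the elementary lemma $\|G-W\|\ge\|1-W\|$ for $G\ge1$ and $W$ unitary, proved by approximate eigenvectors --- legitimate, since every spectral point of the normal operator $W$ lies in its approximate point spectrum, and the scalar estimate $g^2-2g\cos\phi+1\ge 2-2\cos\phi$ for $g\ge1$ is right. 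Every step checks, including the identities $C(t)=S|C|^t=|C|^{-t}S$, which follow from $S|C|S=|C|^{-1}$, itself a consequence of $C^2=1$. What your route buys: it is entirely self-contained, and at $t=1$, where $\frac12(C(1)+1)=E$ by the polar decomposition $C=S|C|$, it yields an independent proof of $\|m(E)-P\|\le\|E-P\|$ --- precisely the inequality the paper imports from \cite{TianXuFu}, which is in fact \emph{stronger} than item 1 of Remark \ref{props matched} as literally stated there ($\|m(E)-E\|\le\|P-E\|$), so your argument quietly repairs a citation that is loose in the paper's own proof; it also makes no use of the infinite rank and nullity hypothesis. What the paper's route buys: brevity, and an argument that stays at the level of the fiber picture of $\pi$ without any spectral computation.
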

\begin{proof}
Let $C=2E-1$ and $C(t)=|C|^{-t/2}\pi(C)|C|^{t/2}$ the geodesic joining $C$ with $\pi(C)=2m(E)-1$. Then using Remark \ref{props matched}.1, for the idempotent $\frac12 C(t)$
$$
\|m(\frac12(C(t)+1))-P\|\le \|\frac12(C(t)+1)-P\|,
$$
for all $t\in\mathbb{R}$. Note that 
$$
\frac12(C(t)+1)=\frac12(|C|^{-t/2}\pi(C)|C|^{t/2}+1)=|C|^{-t/2}\frac12 (\pi(C)+1)|C|^{t/2}=|C|^{-t/2}E|C|^{t/2}.
$$
Also note that all elements in the fiber $\pi^{-1}(S)$ project onto $S$: $\pi(C(t))=S$, i.e., $m(\frac12(C(t)+1)=\frac12(S+1)$. Then the norm inequality above reads
$$
\|\frac12(S+1)-P\|\le \|\frac12(C(t)+1)-P\|,
$$
which proves our claim.
\end{proof}
\begin{rem}
With the same argument, it can be shown that for any idempotent $F$ such that $2F-1$ belongs to the fiber $\pi^{-1}(S)$ of $S=2E-1$, one has that for any projection $P\in\p(\h)$,
$$
\|m(E)-P\|\le \|F-P\|.
$$
\end{rem}
\begin{rem}
Also the property that  the tangent map $(T\pi)$ of $\pi$ is contractive can be used. Denote by $d$ the Finsler metric in $\q(\h)$: 
$$
d(E,F)=\inf \{\ell(\gamma): \gamma \hbox{ is smooth and joins } E \hbox{ and } F \hbox{ in } \q\},
$$
where, for $\gamma$ joining $\gamma(a)=E$ and $\gamma(b)=F$ 
$$
\ell(\gamma)=\int_a^b |\dot{\gamma}(t)|_{\gamma(t)} dt.
$$
Therefore, for  $E,F\in\q$ we have that
$$
d(\pi(E),\pi(F))\le d(E,F).
$$
For any curve $\gamma$ in $\q$ joining $\gamma(a)=E$ and $\gamma(b)=F$, we have that $\pi(\gamma)$ is smooth and joins $\pi(E)$ and $\pi(F)$, and since $(T\pi)$ is contractive, 
$$
|\dot{\pi(\gamma)}(t)|_{\pi(\gamma)(t)}=|(T\pi)_{\gamma(t)}\dot{\gamma}(t)|_{\pi(\gamma)(t)}\le |\dot{\gamma}(t)|_{\gamma(t)}.
$$
It follows that $\ell(\pi(\gamma))\le\ell(\gamma)$, and the assertion follows.
\end{rem}

\bigskip

In particular, for any orhogonal projection $P$ (in the same connected component as $\pi(E)$, i.e., with the same rank and nullity as $E$), one has that
\begin{equation}\label{desigualdad P E}
d(P,m(E))\le d(P,E).
\end{equation}
We may describe the situation with the following figure:
	\begin{center}
	\includegraphics[width=0.5\textwidth]{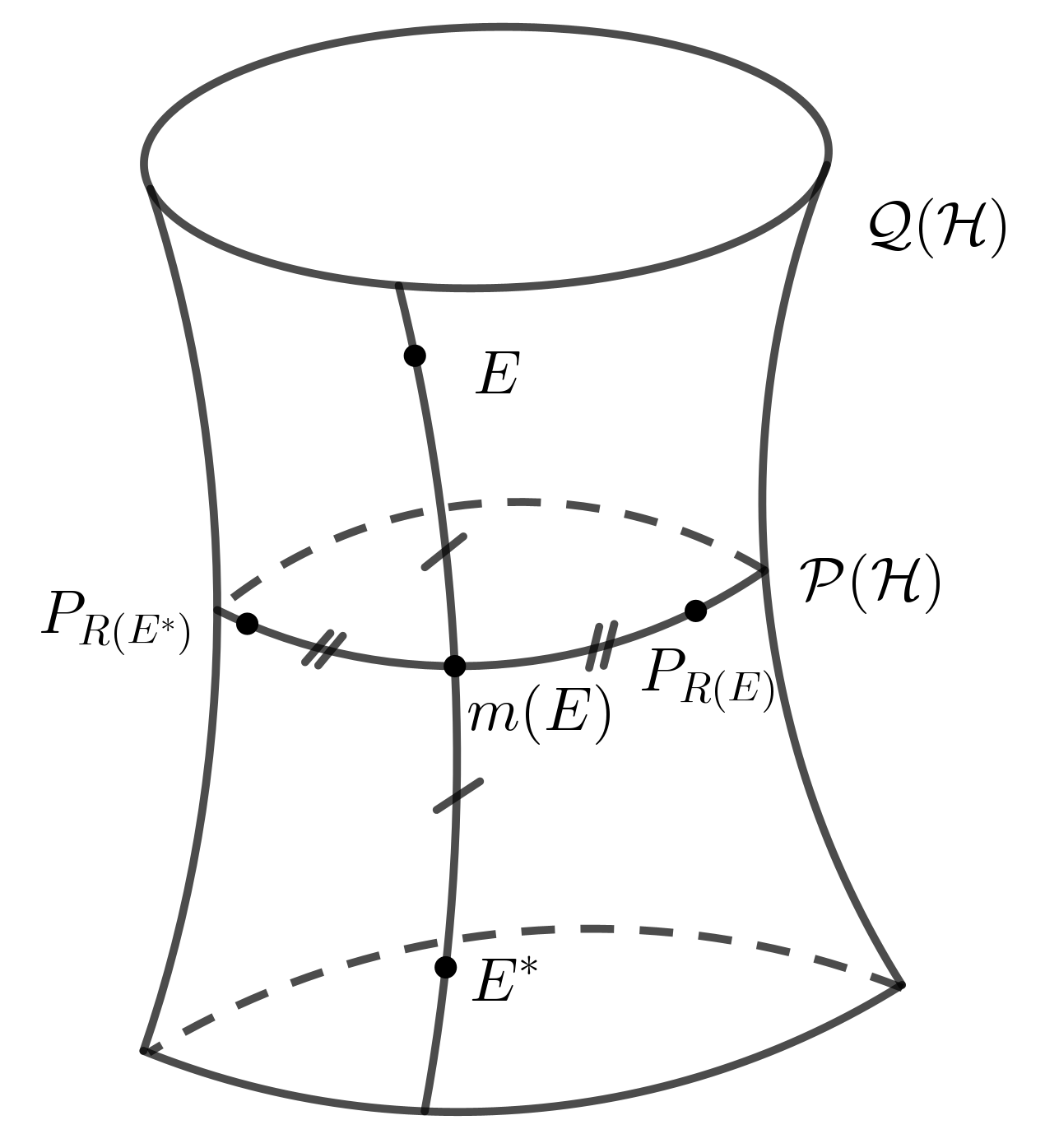}
	\end{center}
\centerline{ Figure 1}
\bigskip

Regarding this figure we have that
\begin{prop}
Identifying  an idepotent $E$   with its associated reflection $2E-1$, and a projection $P$ with the symmetry $2P-1$ .
\begin{enumerate}
\item
$$
d(E,m(E))=d(E^*,m(E))=\|\log |2E-1|\|
$$
and
$$
\|E-m(E)\|=\|E^*-m(E)\|=\frac12\left(\sqrt{\|B\|^2-1}+\|B\|\right).
$$ 
\item
$$
d(P_{R(E)},m(E))=d(P_{R(E^*)}, m(E))=\frac12\arctan(\|B\|)
$$
and
$$
\|P_{R(E)}-m(E)\|=\|P_{R(E^*)}-m(E)\|=\sin(\frac12\arctan(\|B\|)).
$$
\end{enumerate}
\end{prop}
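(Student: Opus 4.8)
The plan is to handle the two items separately: item (1) lives in the Finsler geometry of $\q(\h)$ (the metric $d$ of the preceding Remark), while item (2) lives in the Grassmannian $\p(\h)$ (the geodesic distance $d_g=\arcsin\|\cdot\|$ recalled in Section 3).

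For the distance in item (1), I would write $C=2E-1$ and $S=\pi(C)=2m(E)-1$, so that $E$ and $m(E)$ lie in the same fiber $\pi^{-1}(S)$, and compute $d(E,m(E))$ as the length of the fiber geodesic (\ref{geodesica fibra}), $C(t)=|C|^{-t}S$, $t\in[0,1]$, which is minimal along its path by \cite{cpr}. The reflection identity $C^2=1$ forces $S|C|S=|C|^{-1}$, hence $S|C|^sS=|C|^{-s}$ for all $s$; from this, $C(t)^*C(t)=S|C|^{-2t}S=|C|^{2t}$, so $|C(t)|=|C|^{t}$. Differentiating $C(t)=e^{-t\log|C|}S$ gives $\dot C(t)=-(\log|C|)C(t)$, and the Finsler weight (\ref{finsler}) then collapses:
$$
|\dot C(t)|_{C(t)}=\||C|^{t/2}\dot C(t)|C|^{-t/2}\|=\|(\log|C|)\,|C|^{-t/2}S|C|^{-t/2}\|=\|(\log|C|)S\|=\|\log|C|\|,
$$
where the third equality uses $|C|^{-t/2}S=S|C|^{t/2}$ and the last uses that $S$ is unitary. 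The speed is thus constant in $t$, the length is $\|\log|C|\|=\|\log|2E-1|\|$, and this is the first equality. For $E^*$ note that $C^*=2E^*-1$ sits in the same fiber because $m(E^*)=m(E)$ (property (3) of Remark~\ref{props matched}); since $C$ is a reflection one has $|C^*|=|C|^{-1}$, so $\|\log|C^*|\|=\|\log|C|\|$, and the identical computation gives $d(E^*,m(E))=\|\log|2E-1|\|$.

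The operator-norm equalities of item (1) then follow from property (2) of Remark~\ref{props matched}, $\|m(E)-E\|=\frac12(\|E\|-1+\sqrt{\|E\|^2-1})$, by substituting the elementary identity $\|E\|=(1+\|B\|^2)^{1/2}$ (proved by maximizing $\|x+By\|$ over $\|x\|^2+\|y\|^2=1$), which yields $\sqrt{\|E\|^2-1}=\|B\|$; the $E^*$ version uses $\|E^*\|=\|E\|$ together with $m(E^*)=m(E)$. For item (2), recall that $m(E)=\delta_E(\frac12)$ is the midpoint of the unique minimal geodesic $\delta_E$ of $\p(\h)$ from $P_{R(E)}$ to $P_{R(E^*)}$. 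Since $\delta_E$ is minimal and affinely parametrized, its restriction to $[0,\frac12]$ is the minimal geodesic from $P_{R(E)}$ to $m(E)$ of half the total length, so $d_g(P_{R(E)},m(E))=\frac12 d_g(P_{R(E)},P_{R(E^*)})=\frac12\arctan\|B\|$ by the Corollary following Theorem~\ref{teo 24}, and symmetrically for $P_{R(E^*)}$. Finally, because $\frac12\arctan\|B\|<\frac{\pi}{4}<\frac{\pi}{2}$, the relation $d_g=\arcsin\|\cdot\|$ may be inverted to give $\|P_{R(E)}-m(E)\|=\sin(\frac12\arctan\|B\|)$, and likewise for $E^*$.

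The only delicate point is the speed computation in item (1): one must verify that the anticommutation $S|C|^sS=|C|^{-s}$ produces the cancellation $|C|^{-t/2}S|C|^{-t/2}=S$, so that the Finsler length of the fiber geodesic is independent of the parametrization point and equals exactly $\|\log|C|\|$; everything else is either a direct appeal to \cite{cpr,TianXuFu} or a one-line functional-calculus identity.
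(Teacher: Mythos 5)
Your proof is correct and takes essentially the same route as the paper: the paper likewise computes $d(E,m(E))$ as the length of the fiber geodesic $\pi(C)|C|^t$ from \cite{cpr} (whose constant speed $\|\log|C|\|$ you verify via the anticommutation $S|C|^sS=|C|^{-s}$ instead of merely citing it), invokes Theorem 3.17 of \cite{TianXuFu} together with $\|E\|=(1+\|B\|^2)^{1/2}$ for the norm in item (1), and obtains item (2) from $d(P_{R(E)},m(E))=\frac12\|X_E\|=\frac12\arctan\|B\|$ combined with $d_g=\arcsin\|\cdot\|$. Note only that your derivation yields $\|E-m(E)\|=\frac12\left(\sqrt{\|B\|^2+1}-1+\|B\|\right)$ --- exactly what the paper's own proof produces --- so the formula $\frac12\left(\sqrt{\|B\|^2-1}+\|B\|\right)$ displayed in the statement is a misprint, the ``$-1$'' belonging outside the square root.
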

\begin{proof}
The first part of assertion 1 is from \cite{cpr}: the unique minimal geodesic $\gamma$ joining $\gamma(1)=C=2E-1$ and $\gamma(0)=\pi(C)=2 m(E)-1$ is $\gamma(t)=\pi(C)|C|^t$, and thus $|\dot{\gamma}(t)|_{\gamma(t)}=\|\log |C|\|$.  The second assertion is Theorem 3.17 of \cite{TianXuFu}: there it was shown that $\|m(E)-E\|=\frac12\left(\|E\|-1+\sqrt{\|E\|^2-1}\right)$, and clearly $\|E\|=\sqrt{\|B\|^2+1}$.

For the second assertion,  note that $d(P_{R(E)},m(E))=\frac12\|X_E\|$, and $\|X_E\|=\arctan \|B\|$.
\end{proof}

\begin{rem}
We may compute explicitely $d(E_a,m(E_a))=\|\log(\Gamma_a)\|$ for $\Gamma_a$ of Example \ref{ejemplo 54} in Section \ref{seccion 7}. Since $\log(|C|)=\frac12\log(|C|^2)=-\frac12\log(|C^*|^{-2})$ and $|\Gamma_a^*|^2=\Gamma_a\Gamma_a^*=M_{1/|\psi_a|^2}=M_{\frac{|1-\bar{a}z|^2}{1-|a|^2}}$, we have that
$$
\log(|\Gamma_a^*|^2)=M_{\log(\frac{|1-\bar{a}z|^2}{1-|a|^2})}.
$$
the function $\log(\frac{|1-\bar{a}z|^2}{1-|a|^2})$ takes values between $\log(1-|a|^2)-\log(1+|a|^2)$ and $\log(1+|a|^2)-\log(1-|a|^2)$. It follows that 
$$
d(E_a,m(E_a))=\frac12 \log\left(\frac{1+|a|}{1-|a|}\right).
$$
Notice that the (unique, minimal) geodesic $\Gamma(t)$  of $\q(\h)$ which satisfies $\Gamma(0)=\pi(\Gamma_a)$ and $\Gamma(1)=\Gamma_a$ is (see \cite{cpr}) $\Gamma(t)=\pi(\Gamma_a)|\Gamma_a|^t$. Then (since $\pi(\Gamma_a)$ is selfadjoint)
$$
\Gamma(-1)=\pi(\Gamma_a)|\Gamma_a|^{-1}=|\Gamma_a|\pi(\Gamma_a)=\left(\pi(\Gamma_a)|\Gamma_a|\right)^*=\Gamma_a^*.
$$
That is, $\Gamma_a^*, \pi(\Gamma_a)$ and $\Gamma_a$ (in that order), belong to the same geodesic in the fiber of $\pi$. In particular, this implies that
$$
d(\Gamma_a,\Gamma_a^*)=\log(\frac{1+|a|}{1-|a|}).
$$
\end{rem}
The fact that $C^*, \pi(C)$ and $C$ (or $E^*, m(E), E$) belong to the same geodesic, at times $t=-1, t=0$ and $t=1$,  holds in general, with the same proof:
\begin{coro}
Let $C\in\q(\h)$. The geodesic $C(t)$, $t\in\mathbb{R}$ with $C(0)=\pi(C)$ and $C(1)=C$, satisfies $C(-1)=C^*$. In other words, $C$ and $C^*$ belong to the same fiber of $\pi$, and  $\pi(C)=2 m(E)-1$ is the midpoint of the unique geodesic of this fiber joining $C$ and $C^*$.
\end{coro}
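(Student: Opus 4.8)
The plan is to read off the claim directly from the explicit geodesic formula (\ref{geodesica fibra}) by evaluating it at $t=-1$. Write $S=\pi(C)$ for the symmetry part of $C$, so that the polar decomposition is $C=S|C|$ and the fiber geodesic through $C(0)=S$ and $C(1)=C$ is $C(t)=S|C|^t$. The whole proof then reduces to the single identity $S|C|^{-1}=C^*$.

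First I would collect the elementary consequences of the hypotheses. Since $S$ is a symmetry we have $S^*=S=S^{-1}$, so taking adjoints in $C=S|C|$ gives $C^*=|C|\,S$. Since $C$ is a reflection it is invertible (with $C^{-1}=C$), hence $|C|$ is boundedly invertible and $|C|^t$ is a well-defined bounded positive operator for every real $t$; this is what makes $C(t)$ meaningful for negative $t$. The key step is to extract the commutation relation between $S$ and $|C|$ from $C^2=1$: expanding $1=C^2=S|C|S|C|$ gives $S|C|S=|C|^{-1}$; conjugating by $S$ gives $S|C|^{-1}S=|C|$, and right-multiplication by $S$ gives $S|C|^{-1}=|C|\,S$.

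Combining the last computations, $C(-1)=S|C|^{-1}=|C|\,S=C^*$, which is exactly the assertion. The remaining statements are then formal. Because the curve $C(t)$ lies entirely in the fiber $\pi^{-1}(S)$, the point $C^*=C(-1)$ lies in the same fiber as $C=C(1)$, so $\pi(C^*)=\pi(C)=S=2m(E)-1$. Reparametrizing the geodesic segment from $C(-1)=C^*$ to $C(1)=C$ affinely onto $[0,1]$ carries the parameter value $t=0$ to the midpoint, so $\pi(C)=C(0)$ is the midpoint of the (unique, by the non-positive-curvature property of the fibers recalled from \cite{cpr}) fiber geodesic joining $C$ and $C^*$.

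I do not expect a genuine obstacle: the whole content is the operator identity $S|C|S=|C|^{-1}$, which is immediate from $C^2=1$, and the only point worth stating carefully is the boundedness and functional-calculus meaning of $|C|^t$, guaranteed by the invertibility of $|C|$. As a consistency check I would note that the three expressions for $C(t)$ in (\ref{geodesica fibra}) coincide precisely because $S$-conjugation inverts $|C|$, i.e.\ $S|C|^tS=|C|^{-t}$ by functional calculus applied to $S|C|S=|C|^{-1}$.
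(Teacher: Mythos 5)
Your proposal is correct and follows essentially the same route as the paper: the paper (in the remark preceding the corollary, with the general case declared ``same proof'') also evaluates the fiber geodesic at $t=-1$ and uses $\pi(C)|C|^{-1}=|C|\pi(C)=\bigl(\pi(C)|C|\bigr)^*=C^*$, the identity you derive via $S|C|S=|C|^{-1}$. Your only (harmless) addition is to prove that commutation relation directly from $C^2=1$ rather than quoting it as implicit in the three equivalent forms of the geodesic formula (\ref{geodesica fibra}) from \cite{cpr}.
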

This valids the scheme at Figure 1: we already have proved that $P_{R(E)}, m(E)$ and $P_{R(E^*)}$ belong o the same (unique. minimal) geodesic of $\p(\h)\subset\q$, at times $t=0, t=\frac12$ and $t=1$.


\begin{thebibliography}{XX}

\bibitem{ando} T. Ando,  Unbounded or bounded idempotent operators in Hilbert space, Linear Algebra Appl. 438 (2013), no. 10, 3769--3775. 

\bibitem{paisanos} E. Andruchow; G. Corach;  D. Stojanoff, D. Projective spaces of a C$^*$-algebra. Integral Equations Operator Theory 37 (2000), no. 2, 143--168.

\bibitem{p-q} E. Andruchow. Operators which are the difference of two projections,  J. Math. Anal. Appl. 420 (2014), no. 2, 1634-1653.





\bibitem{reflexiones en L2} E. Andruchow, Reflections in $L^2(\mathbb{T})$, preprint (arXiv 2504.11600).



\bibitem{spitkovskyetal} A. B\"ottcher; I.M. Spitkovsky, A gentle guide to the basics of two projections theory. Linear Algebra Appl. 432 (2010), 1412--1459.

\bibitem{buckholtz}  D. Buckholtz, Hilbert space idempotents and involutions. Proc. Amer. Math. Soc. 128 (2000), no. 5, 1415--1418.






 




\bibitem{cpr} G. Corach; H. Porta; L. Recht,  The geometry of spaces of projections in $C^*$-algebras, Adv. Math. 101 (1993), no. 1, 59--77.



\bibitem{davis} C. Davis,  Separation of two linear subspaces, Acta Sci. Math. Szeged 19 (1958) 172--187.


\bibitem{dixmier} J. Dixmier, Position relative de deux vari\'et\'es lin\'eaires ferm\'ees dans un espace de Hilbert,  Revue Sci. 86 (1948), 387-399.









\bibitem{halmos} P.R. Halmos,  Two subspaces, Trans. Amer. Math. Soc. 144 (1969), 381--389.











\bibitem{pr} H. Porta; L. Recht, Minimality of geodesics in Grassmann manifolds, Proc. Amer. Math. Soc. 100 (1987), 464--466.






\bibitem{TianXuFu} X.  Tian; Q. Xu; C. Fu,
The matched projections of idempotents on Hilbert C*-modules,
J. Oper. Th. (to appear, arXiv 2305.12984v3).

\bibitem{ZhaoFangLi} C. Zhao; Y. Fang; T.  Li, 
 Some characterizations of the quasi-projection pairs and the matched projections. Banach J. Math. Anal. 19, 18 (2025).


\end{thebibliography}
\end{document}